\begin{document}

\def\cL{\mathcal{L}}
\def\cM{\mathcal{M}}
\def\cN{\mathcal{N}}
\def\cO{\mathcal{O}}
\def\cObig{\mathcal{O} \hspace{-.5mm}}
\def\cS{\mathcal{S}}
\def\cX{\mathcal{X}}
\def\cY{\mathcal{Y}}
\def\ee{\varepsilon}

\newcommand{\removableFootnote}[1]{}

\newtheorem{theorem}{Theorem}

\title{
Unfolding homoclinic connections formed by corner intersections in piecewise-smooth maps.
}
\author{
D.J.W.~Simpson\\
Institute of Fundamental Sciences\\
Massey University\\
Palmerston North\\
New Zealand}
\maketitle




\begin{abstract}

The stable and unstable manifolds of an invariant set of a piecewise-smooth map are themselves piecewise-smooth. Consequently, as parameters of a piecewise-smooth map are varied, an invariant set can develop a homoclinic connection when its stable manifold intersects a non-differentiable point of its unstable manifold (or vice-versa). This is a codimension-one bifurcation analogous to a homoclinic tangency of a smooth map, referred to here as a homoclinic corner. This paper presents an unfolding of generic homoclinic corners for saddle fixed points of planar piecewise-smooth continuous maps. It is shown that a sequence of border-collision bifurcations limits to a homoclinic corner and that all nearby periodic solutions are unstable.

\end{abstract}



\section{Introduction}
\label{sec:intro}
\setcounter{equation}{0}

A transverse intersection between the stable and unstable manifolds of an invariant set
implies the existence of a Smale horseshoe and chaotic dynamics \cite{PaTa93,Ro04}.
For smooth maps, transverse intersections are born in homoclinic tangencies
where stable and unstable manifolds intersect tangentially.
For piecewise-smooth (PWS) maps, however, stable and unstable manifolds can be continuous but non-differentiable.
Transverse intersections may be created when
one manifold intersects a non-differentiable point of the other manifold.
Locally one manifold has a V-shape, instead of the U-shape of a quadratic tangency,
and here the bifurcation is referred to as a homoclinic corner.
The purpose of this paper is to unfold a generic homoclinic corner.

For simplicity only continuous planar maps are considered.
Specifically, this paper considers maps of the form
\begin{equation}
\begin{bmatrix} x \\ y \end{bmatrix} \mapsto f(x,y;\xi) =
\begin{cases}
f_1(x,y;\xi) \;, & (x,y) \in \cM_1 \\[-1.6mm]
\hspace{9mm} \vdots \\[-2.3mm]
f_m(x,y;\xi) \;, & (x,y) \in \cM_m
\end{cases} \;,
\label{eq:f}
\end{equation}
where the regions $\cM_1,\ldots,\cM_m$ partition the domain $\cM \subset \mathbb{R}^2$.
Each $f_j : \cM_j \times \mathbb{R} \to \cM$ is smooth
and smoothly extendable into an open set containing the closure of $\cM_j$\removableFootnote{
I.e.~there are no singularities on the switching manifolds.
This condition is needed so that we can construct a PW-Taylor series for $f^r$.
Note that $D f_j$ can be unbounded if $\cM_j$ is unbounded.
}.
The boundaries of the $\cM_j$ are one-dimensional PWS manifolds,
termed switching manifolds, and on the switching manifolds $f$ is continuous.
Maps of the form (\ref{eq:f})
provide useful mathematical models of discrete-time phenomena with switching events,
particularly in social sciences \cite{PuSu06},
and arise as return maps of PWS systems of differential equations \cite{DiBu08}.

First let us discuss border-collision bifurcations (BCBs) of (\ref{eq:f}).
As the parameter $\xi$ is varied, (\ref{eq:f}) undergoes a BCB when a fixed point collides with a switching manifold \cite{Si16}.
By expanding (\ref{eq:f}) about the BCB and truncating the expansion to leading order,
one obtains a piecewise-linear map that approximates the local dynamics.
Structurally stable dynamics of the piecewise-linear approximation
are exhibited by (\ref{eq:f}) near the BCB.
If the BCB is non-degenerate,
then the piecewise-linear approximation can be transformed to
\begin{equation}
\begin{bmatrix} x \\ y \end{bmatrix} \mapsto
\begin{cases}
\begin{bmatrix} \tau_L & 1 \\ -\delta_L & 0 \end{bmatrix}
\begin{bmatrix} x \\ y \end{bmatrix} +
\begin{bmatrix} \mu \\ 0 \end{bmatrix} \;, & x \le 0 \\
\begin{bmatrix} \tau_R & 1 \\ -\delta_R & 0 \end{bmatrix}
\begin{bmatrix} x \\ y \end{bmatrix} +
\begin{bmatrix} \mu \\ 0 \end{bmatrix} \;, & x \ge 0
\end{cases} \;,
\label{eq:bcNormalForm}
\end{equation}
where $\tau_L, \delta_L, \tau_R, \delta_R, \mu \in \mathbb{R}$ are parameters.
The map (\ref{eq:bcNormalForm}) is known as the two-dimensional border-collision normal form \cite{NuYo92,BaGr99}.

\begin{figure}[b!]
\begin{center}
\setlength{\unitlength}{1cm}
\begin{picture}(14,6)
\put(0,0){\includegraphics[height=6cm]{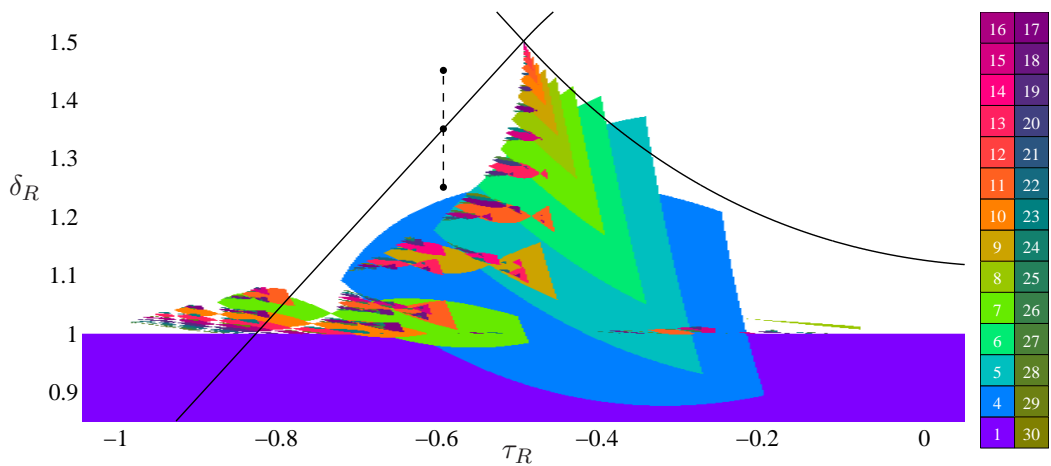}}
\put(6.55,0){\small $\tau_R$}
\put(0,3.5){\small $\delta_R$}
\end{picture}
\caption{
A bifurcation set of the two-dimensional border-collision normal form (\ref{eq:bcNormalForm})
with (\ref{eq:param}).
The black curves are loci of homoclinic corners.
The three black dots indicate the parameter values of Fig.~\ref{fig:pptRmp6}.
The coloured regions are mode-locking regions within which
(\ref{eq:bcNormalForm}) has a stable periodic solution
of a period indicated by the grid on the right.
These are shown up to period $30$.
At points where mode-locking regions overlap, the highest period is indicated.
Only mode-locking regions corresponding to ``rotational'' periodic solutions \cite{Si16,SiMe09}
are shown, because these can be computed efficiently and accurately
and other mode-locking regions that exist over the given parameter range
were found to be relatively small.
\label{fig:tonguesExact}
}
\end{center}
\end{figure}

The border-collision normal form arises in the analysis below.
But it is also an instance of the general form (\ref{eq:f}),
and so here it is used to illustrate homoclinic corners.
As an example, with
\begin{equation}
\tau_L = 2 \;, \qquad
\delta_L = 0.75 \;, \qquad
\mu = 1 \;,
\label{eq:param}
\end{equation}
and any $\tau_R, \delta_R \in \mathbb{R}$,
the map (\ref{eq:bcNormalForm}) has a fixed point $(x,y) = (-4,3)$
with eigenvalues $\lambda = 0.5$ and $\sigma = 1.5$.
The stable and unstable manifolds of the fixed point exhibit a homoclinic corner
along the two curves shown in Fig.~\ref{fig:tonguesExact}.
In order to indicate attractors, this figure also shows mode-locking regions -- where stable periodic solutions exist.
However, as explained below these have little relevance to the homoclinic corners\removableFootnote{
The sequence of mode-locking regions approaching $(\tau_R,\delta_R) = (-0.5,1.5)$
correspond to single-round periodic solutions.
The smaller regions that partially overlap the left parts of these correspond to multi-round periodic solutions.
}
(except near the codimension-two point $(\tau_R,\delta_R) = (-0.5,1.5)$
where the intersecting branches of the stable and unstable manifolds are coincident).

With $\tau_R = -0.6$, for example, a homoclinic corner occurs at $\delta_R = 1.35$.
Fig.~\ref{fig:pptRmp6} shows phase portraits at, and on either side of, the homoclinic corner.
On one side of the homoclinic corner the stable and unstable manifolds of the fixed point do not intersect.
A chaotic attractor\removableFootnote{
With $(\tau_R,\delta_R) = (-0.6,1.35)$,
I obtained a value of $0.212$ for the Lyapunov exponent of the attractor, {\sc goLyap.m}.

Immediately below most of the homoclinic corner curve
(where no stable periodic solutions are shown),
there appears to exist a chaotic attractor
(numerically I obtained positive Lyapunov exponents).
In a substantial part of parameter space this attractor coexists
with one or more stable periodic solutions.
The bifurcations responsible for the destruction of the attractor
with a decrease in the value of $\delta_R$ appear to be non-trivial and varied.
The chaotic attractor is destroyed in the homoclinic corner
(exactly at the homoclinic corner it exists).
Just above the homoclinic corner the stable manifold protrudes into
the region of phase space where the attractor was.
Here forward orbits can undergo transient chaotic motion
(for an extremely long time such that numerical verification of destruction at the homoclinic corner is difficult)
before becoming unbounded.
}
appears to be the only attractor of (\ref{eq:bcNormalForm}) for all $1.25 \le \delta_R \le 1.35$.
This attractor is destroyed at the homoclinic corner in a crisis \cite{GrOt83}.
On the other side of the homoclinic corner there is transient chaos and typical forward orbits diverge.

\begin{figure}[b!]
\begin{center}
\setlength{\unitlength}{1cm}
\begin{picture}(16,5.4)
\put(0,0){\includegraphics[height=5cm]{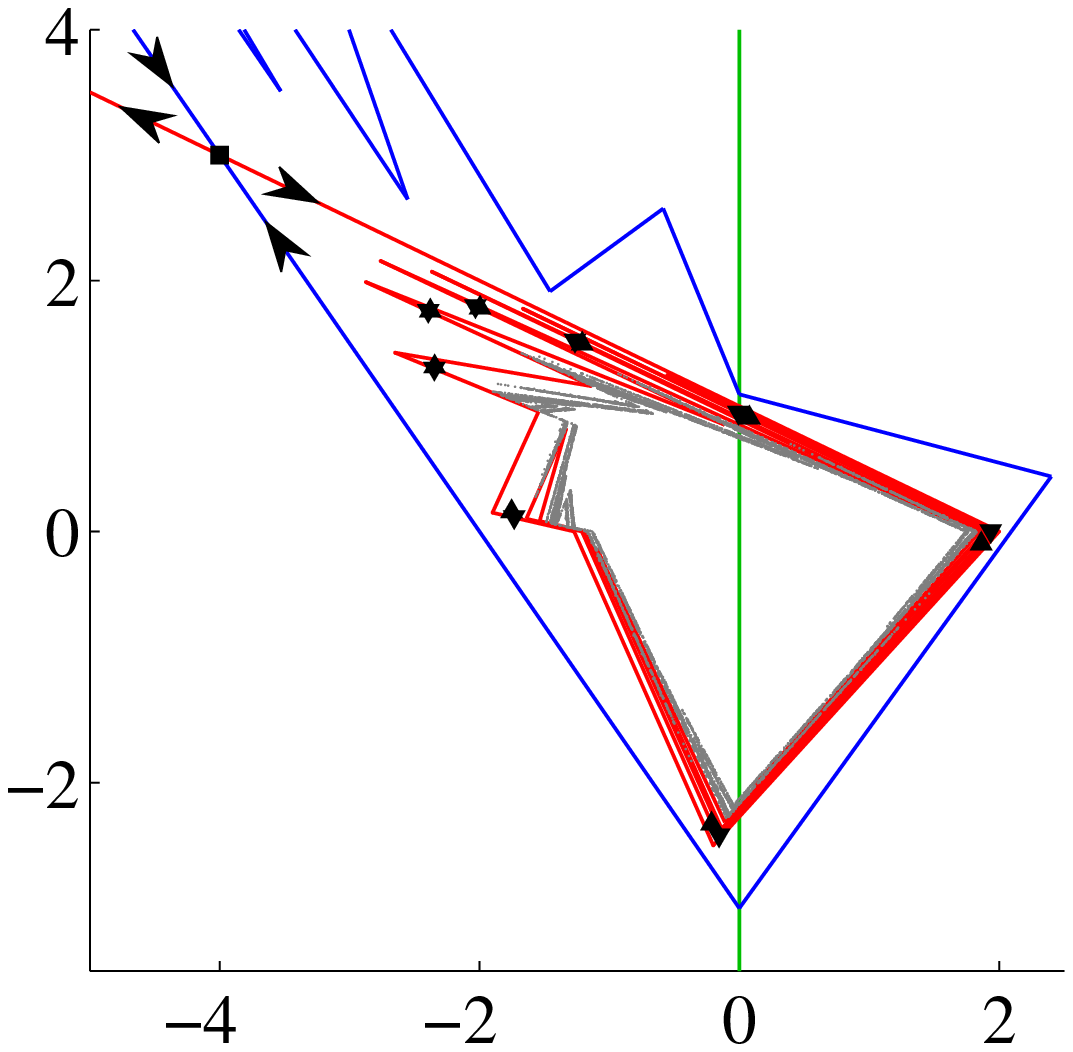}}
\put(5.5,0){\includegraphics[height=5cm]{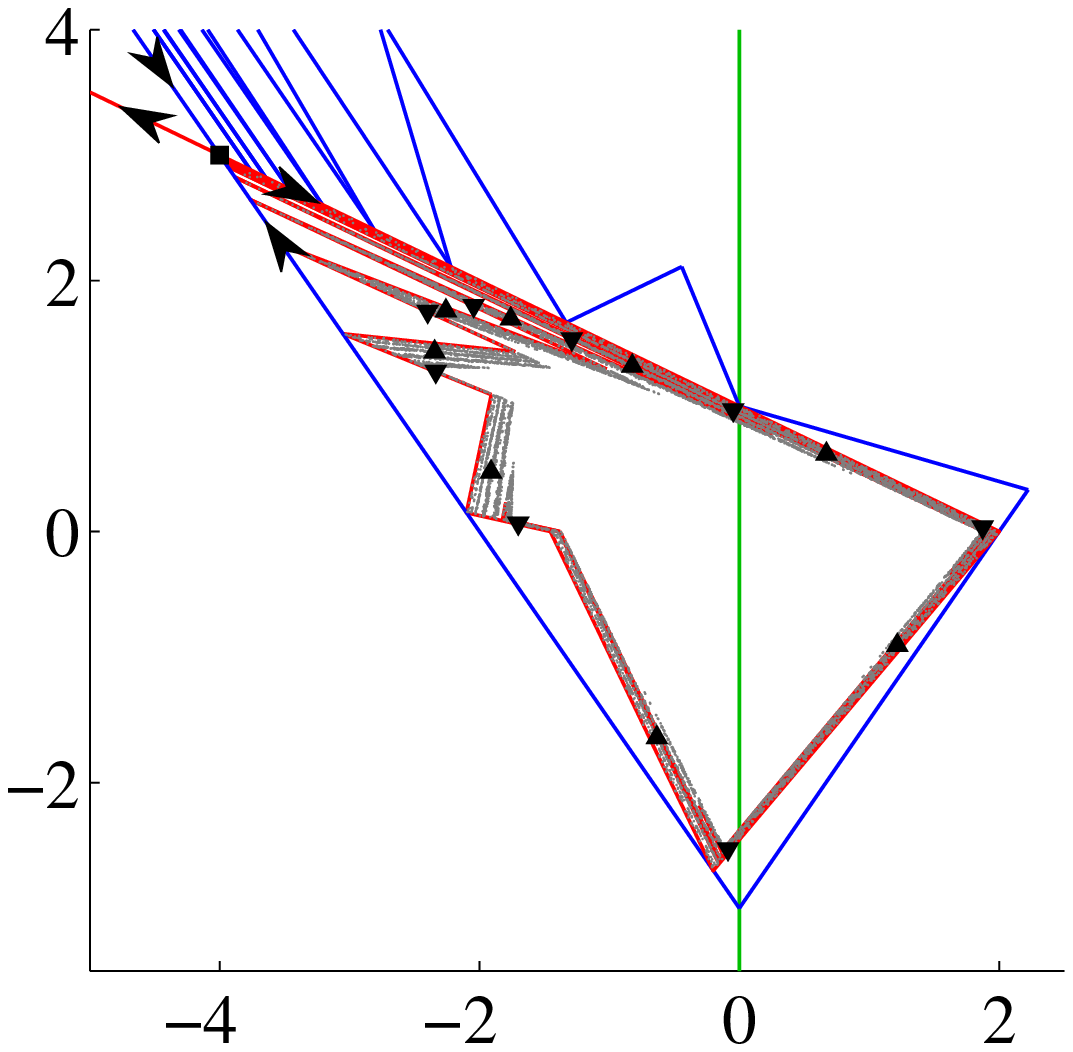}}
\put(11,0){\includegraphics[height=5cm]{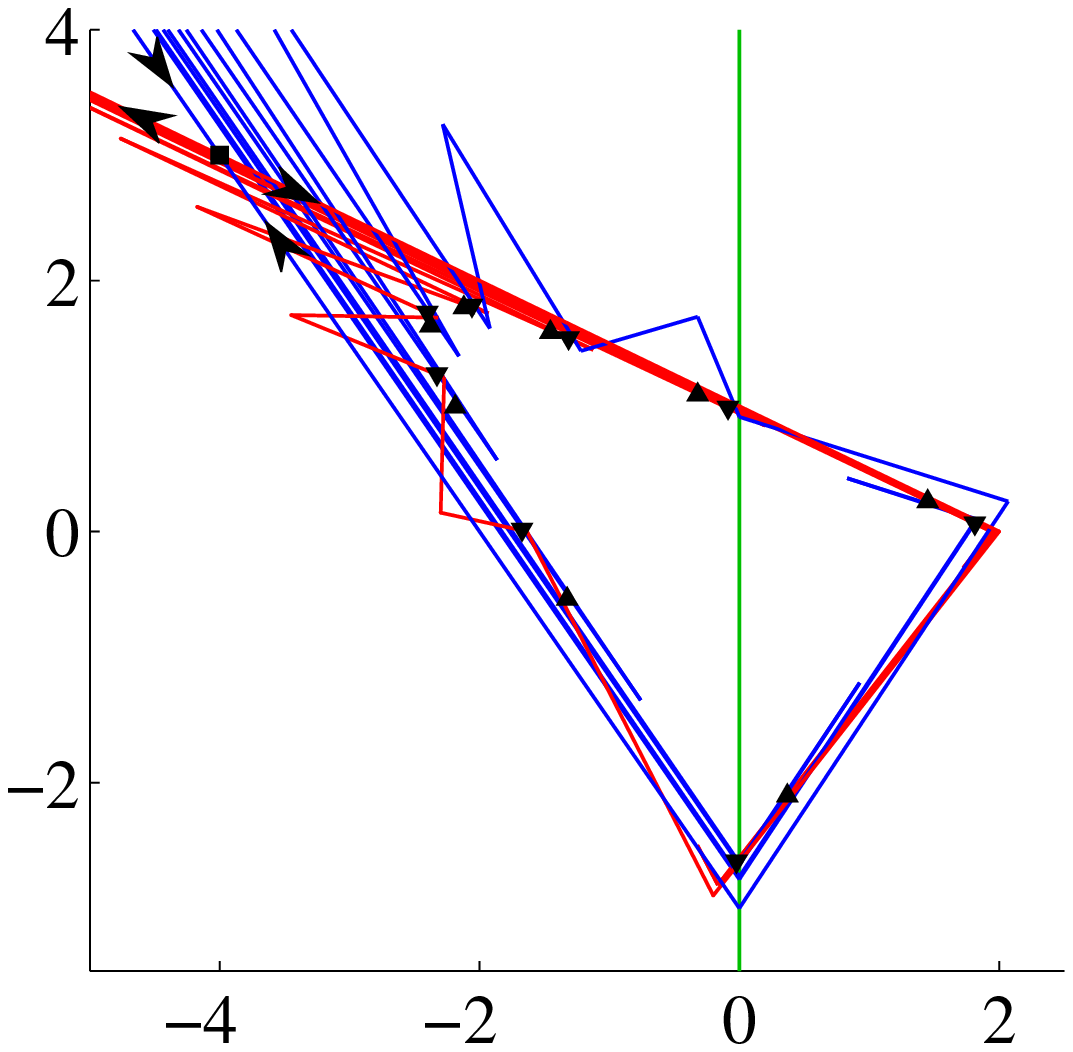}}
\put(2.75,0){\small $x$}
\put(0,3.06){\small $y$}
\put(8.25,0){\small $x$}
\put(5.5,3.06){\small $y$}
\put(13.75,0){\small $x$}
\put(11,3.06){\small $y$}
\put(2,5.2){\small $\delta_R = 1.25$}
\put(7.5,5.2){\small $\delta_R = 1.35$}
\put(13,5.2){\small $\delta_R = 1.45$}
\end{picture}
\caption{
Phase portraits of (\ref{eq:bcNormalForm}) with (\ref{eq:param}),
$\tau_R = -0.6$, and three different values of $\delta_R$.
In each phase portrait the stable and unstable manifolds of the saddle fixed point $(x,y) = (-4,3)$ are shown.
The left and middle phase portraits show
the	first $10^4$ points of the forward orbit of $(x,y) = (0,0)$,
with the first $10^3$ points removed. 
Each phase portrait also shows two single-round periodic solutions of period $n = 8$,
see Fig.~\ref{fig:bifDiagdRmp6}.
\label{fig:pptRmp6}
}
\end{center}
\end{figure}

The destruction of a chaotic attractor in a homoclinic corner is described in \cite{ZhMo08}
and other transitions are also possible.
For instance, as a homoclinic corner of Fig.~\ref{fig:tonguesExact} with $\delta_R < 1$ is crossed,
the basin of attraction of a stable fixed point appears to become fractal.
Other numerical explorations have revealed that
an invariant circle can be destroyed at a homoclinic corner \cite{ZhMo08,ZhMo06b}
and the number of bands in a chaotic attractor can change \cite{MaSu98}.

Homoclinic corners may be viewed as continuous but non-differentiable distortions of homoclinic tangencies.
For this reason it is expected that the two types of bifurcation exhibit similar unfoldings.
Consider for a moment a homoclinic tangency for a fixed point in a smooth two-dimensional map
with eigenvalues $\lambda$ and $\sigma$ that satisfy $0 < \lambda < 1 < \sigma$.
Suppose that the map depends smoothly on a scalar parameter $\xi$ and that the tangency occurs at $\xi = 0$.
The simplest bifurcations near $\xi = 0$ involve single-round periodic solutions.
These involve one excursion far from the fixed point.
Generically, for sufficiently large values of $n \in \mathbb{Z}^+$,
a pair of single-round periodic solutions of period $n$
is created in a saddle-node bifurcation at some $\xi = \xi_n$,
with $\xi_n \to 0$ as $n \to \infty$.
If $\lambda \sigma < 1$, then one of these periodic solutions is stable,
and there exists $C \ne 0$ such that 
\begin{equation}
\xi_n = C \sigma^{-n} + \cObig \left( \lambda^n \right) + \cObig \left( \sigma^{-2n} \right) \;,
\label{eq:scalingLaw}
\end{equation}
see \cite{GaSi72,GaSi73,GaWa87}.
Near each $\xi = \xi_n$, the stable period-$n$ solution loses stability in a period-doubling bifurcation,
beyond which there is a period-doubling cascade leading to a chaotic attractor
that is subsequently destroyed in a homoclinic tangency.
The dynamics are well approximated by a one-dimensional quadratic map
that can be used to explain this bifurcation sequence \cite{PaTa93}.

\begin{figure}[b!]
\begin{center}
\setlength{\unitlength}{1cm}
\begin{picture}(14,6)
\put(0,0){\includegraphics[height=6cm]{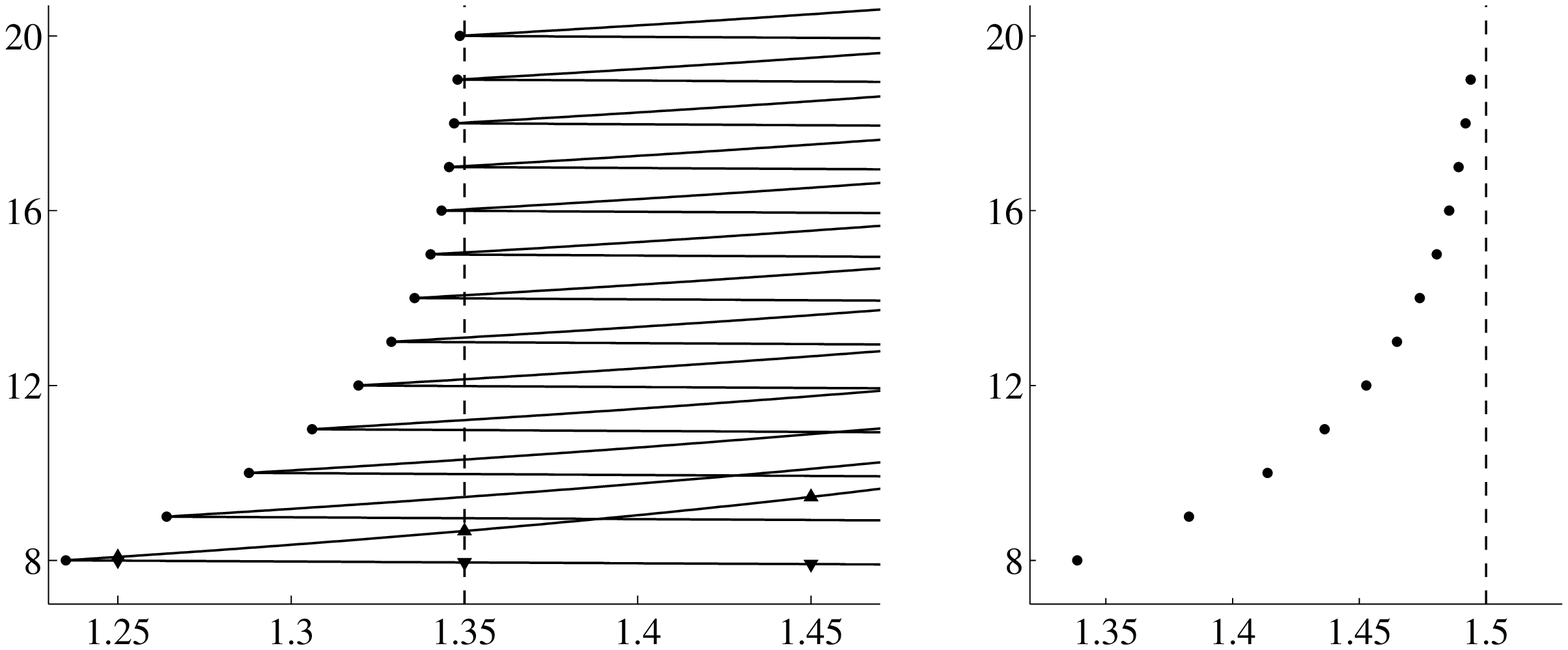}}
\put(1.5,5.9){\sf \bfseries A}
\put(9.8,5.9){\sf \bfseries B}
\put(4.4,.1){\small $\delta_R$}
\put(0,3.4){\small $n \hspace{-.3mm} + \hspace{-.3mm} x$}
\put(10.7,0){\small $\frac{\delta_{R_n} - \delta_{R,HC}}{\delta_{R_{n+1}} - \delta_{R,HC}}$}
\put(13.43,.59){\scriptsize $=$}
\put(13.7,.57){\footnotesize $\sigma$}
\put(8.7,3.4){\small $n$}
\end{picture}
\caption{
Panel A is a bifurcation diagram of (\ref{eq:bcNormalForm}) with (\ref{eq:param}) and $\tau_R = -0.6$.
For all $n \ge 8$, two single-round periodic solutions of period-$n$ are created in BCBs at values $\delta_{R_n}$
(indicated with black dots) that limit to $\delta_{R,HC} = 1.35$.
Each periodic solution involves one point on the switching manifold ($x = 0$) at its corresponding BCB.
The $x$-value of this point, plus the period $n$, is plotted as a function of $\delta_R$.
The triangles highlight the period-$8$ solutions plotted in Fig.~\ref{fig:pptRmp6}.
Panel B demonstrates the scaling law (\ref{eq:scalingLaw}).
\label{fig:bifDiagdRmp6}
}
\end{center}
\end{figure}

Fig.~\ref{fig:bifDiagdRmp6}-A is a bifurcation diagram showing
single-round periodic solutions near the homoclinic corner of Fig.~\ref{fig:pptRmp6}.
These are created in pairs at BCBs.
Fig.~\ref{fig:bifDiagdRmp6}-B shows that the fraction
$\frac{\delta_{R_n} - \delta_{R,HC}}{\delta_{R_{n+1}} - \delta_{R,HC}}$,
appears to converge to $\sigma$ as $n \to \infty$,
where $\delta_{R,HC} = 1.35$ and $\delta_{R_n}$ denotes the BCB value at which period-$n$ solutions are created.
This suggests that the BCBs adhere to the same scaling law as the saddle-node bifurcations (\ref{eq:scalingLaw}).

For this example $\lambda \sigma < 1$, yet all the periodic solutions
indicated in Fig.~\ref{fig:bifDiagdRmp6}-A are unstable.
The same observation was made in \cite{Si16}
for a homoclinic corner to a period-$3$ solution of (\ref{eq:bcNormalForm}).
In contrast, near the codimension-three homoclinic points of (\ref{eq:bcNormalForm}) described in \cite{Si14,Si14b},
single-round periodic solutions are stable and can coexist in arbitrarily large numbers.

The remainder of this paper is organised as follows.
The next section introduces equations useful for analysing
the dynamics near an arbitrary homoclinic corner of (\ref{eq:f}).
Section \ref{sec:assumptions} provides conditions on the parameters in these equations that ensure genericity.
In \S\ref{sec:unstable} single-round periodic solutions are described
and it is proved that all periodic solutions near a homoclinic corner (single-round and multi-round) are unstable.
In \S\ref{sec:bcbs} BCBs of single-round periodic solutions are analysed.
It is shown that the BCBs indeed satisfy the scaling law (\ref{eq:scalingLaw})
and that the periodic solutions are saddles with stable and unstable manifolds that intersect transversally.
Finally, \S\ref{sec:conc} provides a summary and discussion.

\section{Equations describing a homoclinic corner}
\label{sec:setup}
\setcounter{equation}{0}

Here we introduce coordinates and equations that capture the dynamics
near a generic homoclinic corner of the PWS map $f$, (\ref{eq:f}).
We first define linearising coordinates in a neighbourhood of a saddle fixed point,
then describe excursions away from the fixed point.
This approach has been successful for unfolding
a variety of generic and degenerate homoclinic tangencies of smooth maps,
see for instance \cite{PaTa93,GaSi72,HoWh84,GoGo09,HiLa95}\removableFootnote{
Note that almost-linearising coordinates are used in \cite{GaSi72,GoGo09}.
}.

Throughout this paper $f^n$ denotes the composition of $f$ with itself $n$ times\removableFootnote{
On pg.~305 of \cite{Ro04} there appears the phrase: ``the composition of $f$ with itself $n$ times''.
}.
Given a list of indices $\cL = j_0 \cdots j_{n-1}$, where each $j_i \in \{ 1,\ldots,m \}$,
\begin{equation}
f_{\cL} = f_{j_{n-1}} \circ \cdots \circ f_{j_0} \;,
\label{eq:fL}
\end{equation}
denotes the composition of the components of $f$ in the order specified by $\cL$.

\subsubsection*{Linearising coordinates}

We assume the homoclinic corner occurs at $\xi = 0$.
Moreover, we assume that the saddle fixed point associated with the homoclinic corner
lies at the origin and in the interior of $\cM_1$
for all sufficiently small values of $\xi$.
We let $\lambda(\xi)$ and $\sigma(\xi)$ denote the eigenvalues of the fixed point, and assume
\begin{equation}
0 < \lambda(0) < 1 < \sigma(0) \;.
\label{eq:eigs1}
\end{equation}
If instead one or both eigenvalues are negative, we can study $f^2$.
We also assume\removableFootnote{
I need to go through the calculations in the case $\lambda(0) \sigma(0) > 1$.
Are there any features that would not be apparent by considering
$\lambda(0) \sigma(0) < 1$ with $f^{-1}$?
}
\begin{equation}
\lambda(0) \sigma(0) < 1 \;.
\label{eq:eigs2}
\end{equation}
If instead $\lambda(0) \sigma(0) > 1$ and $f$ is invertible, we can study $f^{-1}$.

We assume each $f_j$ is $C^{\infty}$\removableFootnote{
We could assume $f_j$ is $C^2$, then there exists $C^1$ linearising coordinates.
This would be sufficient for the purposes of this paper,
however the error terms become distractingly complicated.
For example, the trace of the Jacobian matrix of the $k^{\rm th}$ single-round periodic solution
would be written as
\begin{equation}
\tau^{\cS}_k = \left( b^{\cS}_2 + o(1) \right) \sigma^k + \cObig \left( \lambda^k \right) \;.
\end{equation}
By assuming one extra degree of differentiability,
the $o(1)$ turns into $\cObig \left( \sigma^{-k} \right)$,
and so the $\cObig \left( \lambda^k \right)$ in the above expression is not particularly relevant.
}.
This ensures that there exists $C^{\infty}$ linearising coordinates \cite{St58,Ha64}\removableFootnote{
The smoothness of the coordinate change is a complicated subject.

Theorem 1 of \cite{St58}:
If the map is $C^K$,
and the eigenvalues are ``non-resonant'',
then there exists $C^J$ linearising coordinates, for some $J$.
Moreover if $K = \infty$ we can take $J = \infty$.
In two dimensions, if the eigenvalues satisfy $0 < \lambda < 1 < \sigma$,
then they are non-resonant.

Theorems 12.1 and 12.2 of \cite{Ha64}:
For any $J > 0$ (or $J = \infty$),
there exists $K \le 2$ (or $K = \infty$)
such that if the map is $C^K$,
and the eigenvalues are ``non-resonant'',
then there exists $C^J$ linearising coordinates.

Theorem 7 of Appendix 1 of \cite{PaTa93}
(referencing \cite{Ha64}):
In two dimensions with $0 < \lambda < 1 < \sigma$,
if the map is $C^2$ then there exists $C^1$ linearising coordinates.
Moreover if the map is $C^3$ then there exists $\ee > 0$ 
such that there exists $C^{1+\ee}$ linearising coordinates.
(On page 47 they assume the map is $C^{\infty}$ in order to 
be able to say there exists $C^2$ linearising coordinates
in order to derive the limiting quadratic map, and reference \cite{St58}.)

In \cite{GoSh96} (referencing \cite{GoSh90}):
in two dimensions with $0 < \lambda < 1 < \sigma$,
if the map is $C^K$ (with $K \ge 3$)
then there exists $C^{K+1}$ almost-linearising coordinates:
the map is almost linear (see eqn.~12)
in a way that is sufficient in that the $k^{\rm th}$ power
(at least in cross coordinates) is linear plus higher order terms (see eqn.~13).
}.
That is, there exists a $C^{\infty}$ coordinate change and an open bounded convex\removableFootnote{
We will want all of the $x$-axis from $x=0$ to $x=1$
and all of the $y$-axis from $y=0$ to $y=1$ 
to lie in the interior of $\cN$.
}
neighbourhood $\cN \subset \cM_1$ that contains the origin,
such that within $\cN$ the map $f$ is given by
\begin{equation}
f_1(x,y;\xi) = \begin{bmatrix} \lambda(\xi) x \\ \sigma(\xi) y \end{bmatrix} \;,
\label{eq:f1}
\end{equation}
for all sufficiently small values of $\xi$.
If instead we only assumed that each $f_j$ was $C^2$,
then we can only guarantee the existence of $C^1$ linearising coordinates \cite{PaTa93,Ha64}.
In this scenario the main results of this paper are unchanged
but the error terms in the key expressions are more complicated to state.

We use $W^s(0,0;\xi)$ and $W^u(0,0;\xi)$ to denote the stable and unstable manifolds of the origin for the map $f$,
and use $E^s(0,0;\xi)$ and $E^u(0,0;\xi)$ to denote the stable and unstable manifolds of the origin
for the linearisation (\ref{eq:f1}).
Notice, $E^s(0,0;\xi)$ and $E^u(0,0;\xi)$ are, respectively, the $x$ and $y$-axes minus the origin.
Since $f$ is equal to its linearisation inside $\cN$,
the parts of $W^s(0,0;\xi)$ and $W^u(0,0;\xi)$ that emanate from the origin
are equal to $E^s(0,0;\xi)$ and $E^u(0,0;\xi)$ within $\cN$.

\begin{figure}[b!]
\begin{center}
\setlength{\unitlength}{1cm}
\begin{picture}(8,6)
\put(0,0){\includegraphics[height=6cm]{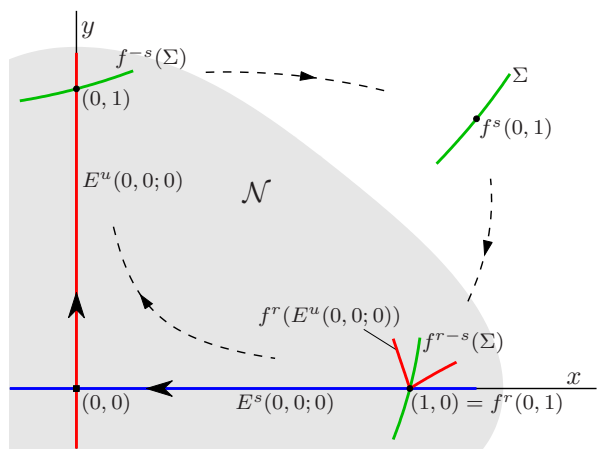}}
\put(7.4,.88){\footnotesize $x$}
\put(.95,5.55){\footnotesize $y$}
\put(3.1,3.3){\small $\cN$}
\put(.95,.54){\scriptsize $(0,0)$}
\put(.95,4.58){\scriptsize $(0,1)$}
\put(6.24,4.2){\scriptsize $f^s(0,1)$}
\put(5.32,.54){\scriptsize $(1,0) = f^r(0,1)$}
\put(1.4,5.13){\scriptsize $f^{-s}(\Sigma)$}
\put(6.68,4.9){\scriptsize $\Sigma$}
\put(5.48,1.36){\scriptsize $f^{r-s}(\Sigma)$}
\put(3.3,1.7){\scriptsize $f^r(E^u(0,0;0))$}
\put(.96,3.5){\scriptsize $E^u(0,0;0)$}
\put(3,.54){\scriptsize $E^s(0,0;0)$}
\end{picture}
\caption{
A schematic of the dynamics of $f$ for $\xi = 0$.
Inside the neighbourhood $\cN$ (the shaded region), $f$ is given by the linear map (\ref{eq:f1}).
The manifold $f^r(E^u(0,0;0))$ intersects $E^s(0,0;0)$ at $(1,0)$ forming a corner,
possibly with the orientation indicated here
(other orientations are shown in Fig.~\ref{fig:orientationHCCorner}).
\label{fig:setupHCCorner}
}
\end{center}
\end{figure}

\subsubsection*{Consequences of a homoclinic corner}

We now exploit the assumption that the origin has a homoclinic corner for $\xi = 0$.
This assumption implies that there exists a point in $E^u(0,0;0) \cap \cN$
that maps to a point in $E^s(0,0;0) \cap \cN$
under $r$ iterations of $f$, for some $r \in \mathbb{Z}^+$,
and that the manifold $f^r(E^u(0,0;0))$ forms a corner at the point in $E^s(0,0;0) \cap \cN$.
By scaling the $x$ and $y$ coordinates,
we can assume that these points are $(0,1)$ and $(1,0)$ respectively, that is,
\begin{equation}
\begin{bmatrix} 1 \\ 0 \end{bmatrix} = f^r(0,1;0) \;,
\label{eq:10}
\end{equation}
see Fig.~\ref{fig:setupHCCorner}.

Since $f^r(E^u(0,0;0))$ forms a corner at $(1,0)$,
the forward orbit of $(0,1)$ must intersect a switching manifold of $f$
before reaching $(1,0)$.
That is, there exists $s \in \{ 1,\ldots,r-1 \}$
such that $f^s(0,1;0) \in \Sigma$, where $\Sigma$ is a switching manifold of $f$.
For genericity we require $s$ to be unique,
that is $f^i(0,1;0)$ does not lie on a switching manifold of $f$
for all $i \in \{ 1,\ldots,r-1 \}$, $i \ne s$.
We also assume that $\Sigma$ is locally a $C^2$ curve\removableFootnote{
We could assume it is $C^{\infty}$ for simplicity,
but this makes the results less general,
plus there is no problem with $\Sigma$ not being $C^{\infty}$
given that the components of $f$ are $C^{\infty}$.
As a minimal example, suppose that $\Sigma$ is given by $x=0$ for $y \le 0$ 
and given by $x=y^3$ for $y \ge 0$.
Suppose that to the left of $\Sigma$, the first component of the map is given by $x(x-y^3)$,
and that to the right of $\Sigma$, the first component of the map is given by $-x(x-y^3)$.
Then $\Sigma$ is $C^2$ but not $C^3$,
and the first component of the map is continuous but not $C^1$ on $\Sigma$
(although it is $C^1$ at $(x,y) = (0,0)$).
},
and write
\begin{equation}
\Sigma = \left\{ (x,y) \,\middle|\; h(x,y;\xi) = 0 \right\} \;,
\label{eq:Sigma}
\end{equation}
for a $C^2$ function $h$.
For $(x,y)$ near $(0,1)$ and small values of $\xi$, we can therefore write
\begin{equation}
f^r(x,y;\xi) = \begin{cases}
f_{\cX}(x,y;\xi) \;, & h(f^s(x,y;\xi);\xi) \le 0 \\
f_{\cY}(x,y;\xi) \;, & h(f^s(x,y;\xi);\xi) \ge 0
\end{cases} \;,
\label{eq:fr}
\end{equation}
where $\cX$ and $\cY$ are lists of indices of length $r$
that differ only in index $s$.

Given $\xi$, the set of all $(x,y)$ near $(0,1)$ for which $h(f^s(x,y;\xi);\xi) = 0$
is $f^{-s}(\Sigma)$ -- the inverse of $\Sigma$ under $f^s$,
or the part of the inverse of $\Sigma$ that lies near $(0,1)$ in the case that $f^s$ has multiple inverses.
For genericity we require $f^{-s}(\Sigma)$ to be a curve
transversally intersecting $E^u(0,0;0)$ at $(0,1)$ for $\xi = 0$,
as shown in Fig.~\ref{fig:setupHCCorner}.
Since $f$ is $C^\infty$ and $h$ is $C^2$, we can write
\begin{equation}
h(f^s(x,y;\xi);\xi) = p_1 x + p_2 (y-1) + p_3 \xi + \cObig \left( \|x,y-1,\xi\|^2 \right) \;,
\label{eq:hfs}
\end{equation}
for some constants $p_1, p_2, p_3 \in \mathbb{R}$,
where $\cO$ denotes ``big-O notation''.
The transversal intersection is ensured by assuming $p_2 \ne 0$.
Moreover, by scaling $h$ we can assume
\begin{equation}
p_2 = 1 \;.
\label{eq:p21}
\end{equation}
Then by (\ref{eq:hfs}) and (\ref{eq:p21}), we can write
\begin{equation}
f^{-s}(\Sigma) = \left\{ (x,y) \,\middle|\; y = g(x;\xi) \right\} \;,
\label{eq:Sigmams}
\end{equation}
where $g$ is a $C^2$ function satisfying
\begin{equation}
g(x;\xi) = 1 - p_1 x - p_3 \xi + \cObig \left( \|x,\xi\|^2 \right) \;.
\label{eq:g}
\end{equation}
We can also rewrite (\ref{eq:fr}) as
\begin{equation}
f^r(x,y;\xi) = \begin{cases}
f_{\cX}(x,y;\xi) \;, & y \le g(x;\xi) \\
f_{\cY}(x,y;\xi) \;, & y \ge g(x;\xi)
\end{cases} \;.
\label{eq:fr2}
\end{equation}

To study the behaviour of $f_{\cX}$ and $f_{\cY}$
near $(x,y) = (0,1)$ and for small values of $\xi$,
we expand $f_{\cX}$ and $f_{\cY}$ about $(x,y;\xi) = (0,1;0)$.
Since $f$ is continuous, $f_{\cX}$ and $f_{\cY}$ are equal on $f^{-s}(\Sigma)$.
It follows that there exist constants
$a_1, a_2, b^{\cX}_1, b^{\cX}_2, b^{\cY}_1, b^{\cY}_2, c_1, c_2 \in \mathbb{R}$ such that
\begin{equation}
f_{\cS}(x,y;\xi) =
\begin{bmatrix}
1 + \left( a_1 + b^{\cS}_1 p_1 \right) x +
b^{\cS}_1 (y-1) + \left( c_1 + b^{\cS}_1 p_3 \right) \xi \\
\left( a_2 + b^{\cS}_2 p_1 \right) x +
b^{\cS}_2 (y-1) + \left( c_2 + b^{\cS}_2 p_3 \right) \xi
\end{bmatrix} + \cObig \left( \|x,y-1,\xi\|^2 \right) \;,
\label{eq:fS}
\end{equation}
where throughout the paper
we use $\cS$ to represent either $\cX$ or $\cY$.

The determinants of the Jacobian matrices of $f_{\cX}$ and $f_{\cY}$ are given from (\ref{eq:fS}) by
\begin{equation}
\det \left( D f_{\cS} \right)|_{(0,1;0)} =
a_1 b^{\cS}_2 - a_2 b^{\cS}_1 \;,
\label{eq:detDfS}
\end{equation}
for $\cS = \cX, \cY$.
If the product
$\det \left( D f_{\cX} \right)|_{(0,1;0)} \det \left( D f_{\cY} \right)|_{(0,1;0)}$
is positive, then $f^r$ is locally invertible.
If $\det \left( D f_{\cX} \right)|_{(0,1;0)}$ and $\det \left( D f_{\cY} \right)|_{(0,1;0)}$
are both positive, then $f^r$ is also locally orientation-preserving\removableFootnote{
The orientation-reversing case seems rather exotic
given that $f_1$ is orientation-preserving.
}.

\section{Three genericity assumptions}
\label{sec:assumptions}
\setcounter{equation}{0}

In this section we identify three additional conditions
necessary to have a generic homoclinic corner.

\subsubsection*{A transversal intersection between $f^{r-s}(\Sigma)$ and $E^s$}

As explained above, $f^{-s}(\Sigma)$ is a curve
transversally intersecting $E^u(0,0;0)$ at $(0,1)$ for $\xi = 0$, Fig.~\ref{fig:setupHCCorner}.
For genericity we also require that $f^{r-s}(\Sigma)$ is a curve
transversally intersecting $E^s(0,0;0)$ at $(1,0)$ for $\xi = 0$.
By (\ref{eq:fr2}) and (\ref{eq:fS}), points on $f^{r-s}(\Sigma)$ for $\xi = 0$ are given by
\begin{equation}
f^r(x,g(x;0);0) = \begin{bmatrix}
1 + a_1 x \\ a_2 x
\end{bmatrix} + \cObig \left( |x|^2 \right) \;.
\label{eq:frmsSigma}
\end{equation}
Thus the condition
\begin{equation}
a_2 \ne 0 \;,
\label{eq:a2nonzero}
\end{equation}
ensures $f^{r-s}(\Sigma)$ transversally intersects $E^s(0,0;0)$.

\subsubsection*{A sign condition ensuring a homoclinic corner}

We have assumed that $f^r(E^u(0,0;0))$ forms a corner at $(1,0) \in E^s(0,0;0)$.
That is, locally, $f^r(E^u(0,0;0))$ includes the point $(1,0)$
and otherwise either lies entirely above $E^s(0,0;0)$ or below $E^s(0,0;0)$.
By (\ref{eq:fr2}) and (\ref{eq:fS}), points on $f^r(E^u(0,0;0))$ are given by
\begin{equation}
f^r(0,y;0) = \begin{cases}
\begin{bmatrix} 1 + b^{\cX}_1 (y-1) \\ b^{\cX}_2 (y-1) \end{bmatrix}
+ \cObig \left( |y-1|^2 \right) \;, & y \le 1 \\
\begin{bmatrix} 1 + b^{\cY}_1 (y-1) \\ b^{\cY}_2 (y-1) \end{bmatrix}
+ \cObig \left( |y-1|^2 \right) \;, & y \ge 1
\end{cases} \;.
\label{eq:frEu}
\end{equation}
Thus the condition
\begin{equation}
b^{\cX}_2 b^{\cY}_2 < 0 \;,
\label{eq:bX2bY2negative}
\end{equation}
ensures that $f^r(E^u(0,0;0))$ intersects $E^s(0,0;0)$ at a corner.

\begin{figure}[t!]
\begin{center}
\setlength{\unitlength}{1cm}
\begin{picture}(10,6.6)
\put(1.5,3.2){\includegraphics[height=3cm]{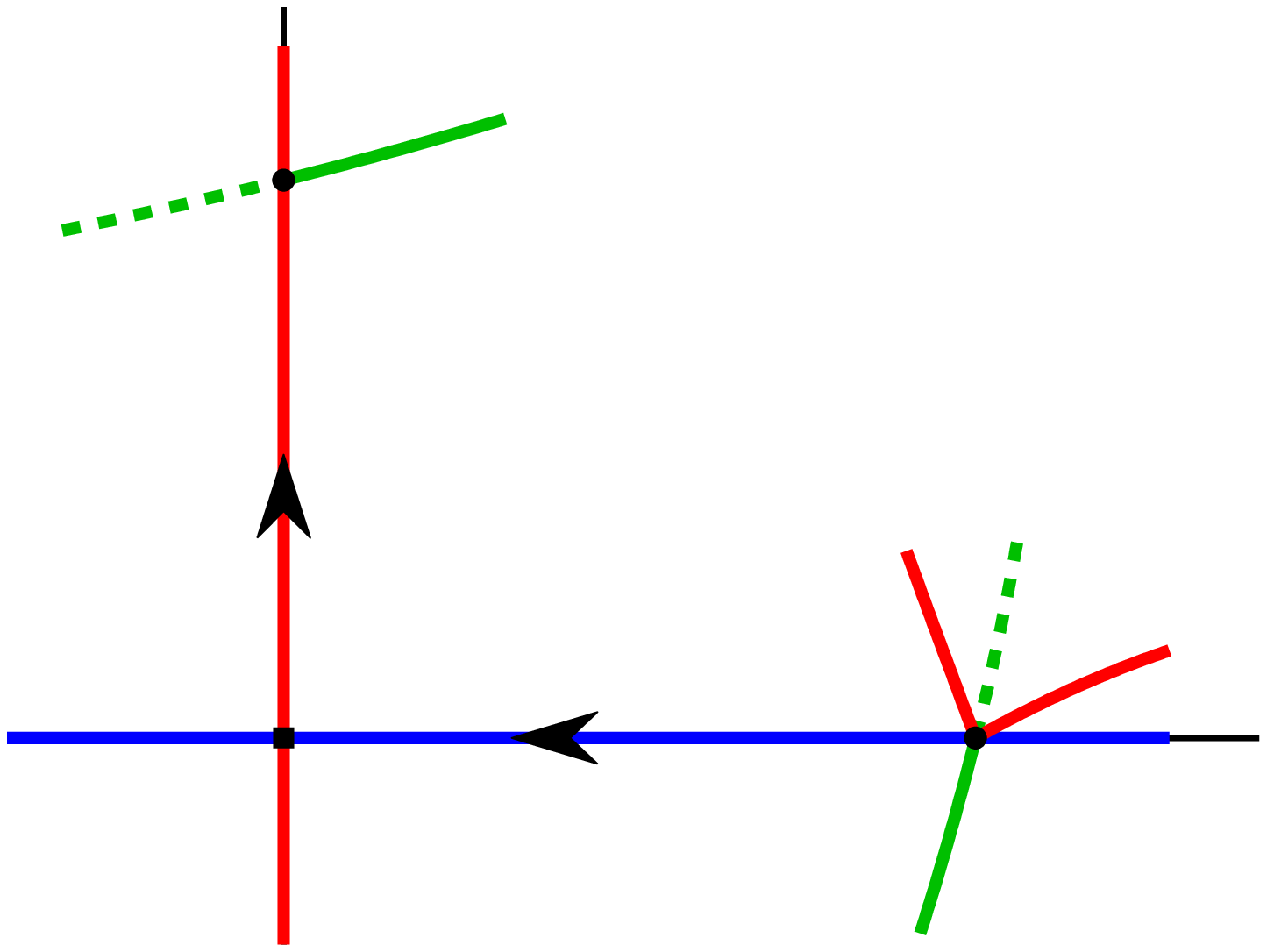}}
\put(6,3.2){\includegraphics[height=3cm]{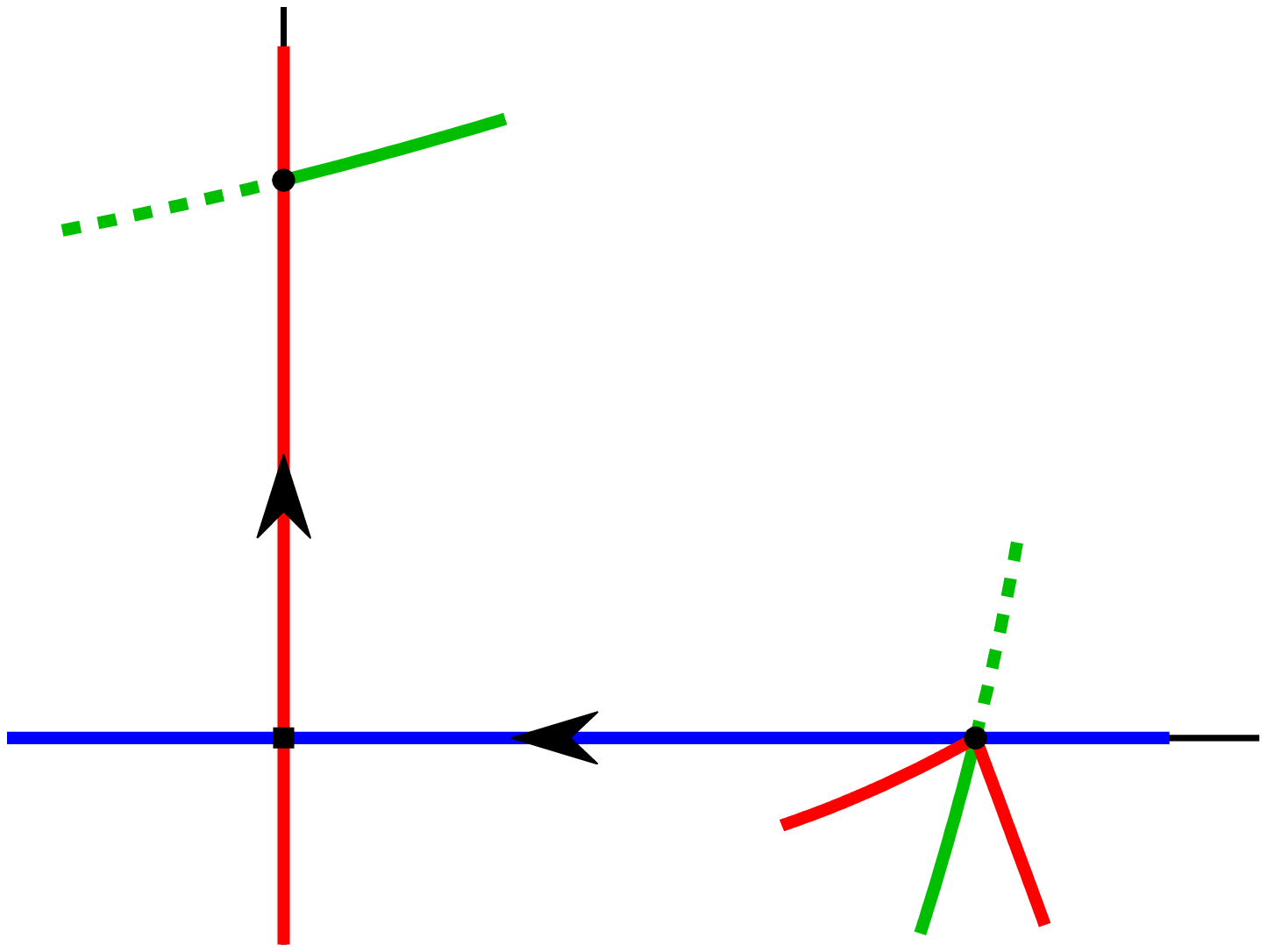}}
\put(1.5,0){\includegraphics[height=3cm]{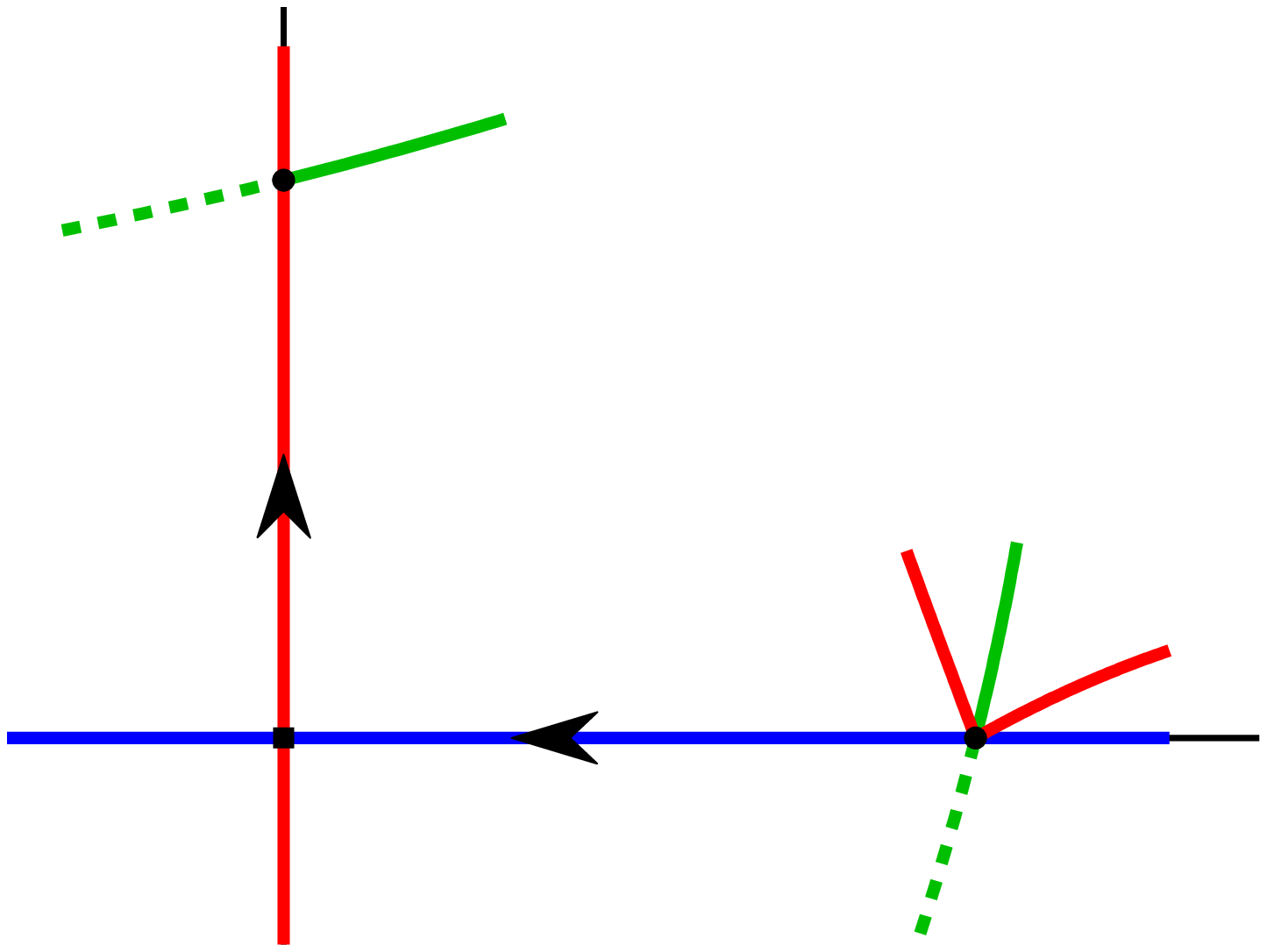}}
\put(6,0){\includegraphics[height=3cm]{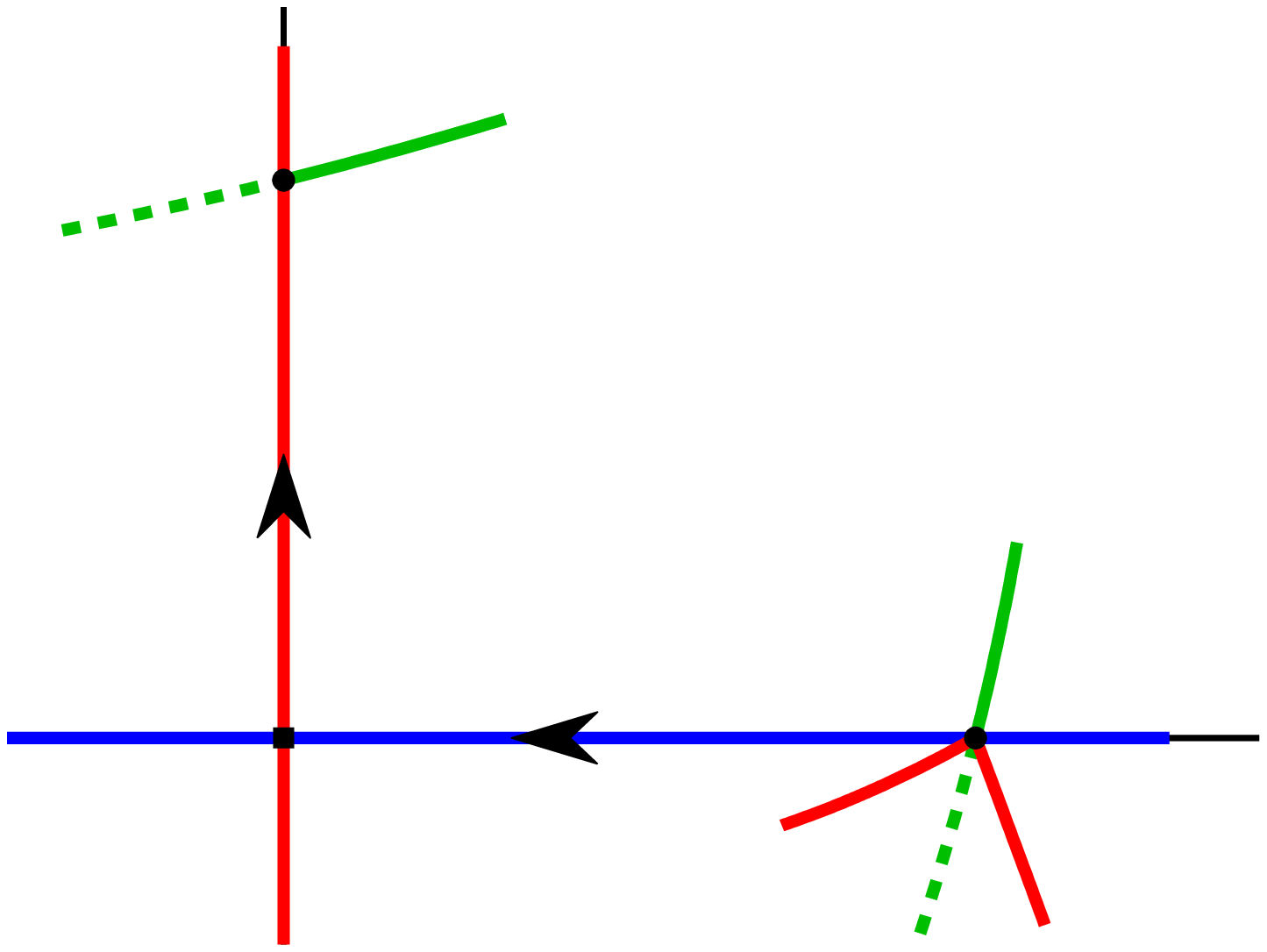}}
\put(0,4.95){\small $a_2 < 0$}
\put(0,1.75){\small $a_2 > 0$}
\put(3.1,6.4){\small $b^{\cX}_2 < 0$}
\put(7.6,6.4){\small $b^{\cX}_2 > 0$}
\put(5.21,3.91){\scriptsize $x$}
\put(2.43,5.96){\scriptsize $y$}
\put(2.4,3.64){\tiny $(0\hspace{-.1mm},\hspace{-.4mm}0)$}
\put(2.4,5.39){\tiny $(0\hspace{-.1mm},\hspace{-.6mm}1)$}
\put(4.52,3.64){\tiny $(1\hspace{-.2mm},\hspace{-.4mm}0)$}
\put(9.71,3.91){\scriptsize $x$}
\put(6.93,5.96){\scriptsize $y$}
\put(6.9,3.64){\tiny $(0\hspace{-.1mm},\hspace{-.4mm}0)$}
\put(6.9,5.39){\tiny $(0\hspace{-.1mm},\hspace{-.6mm}1)$}
\put(9.12,3.64){\tiny $(1\hspace{-.2mm},\hspace{-.4mm}0)$}
\put(5.21,.71){\scriptsize $x$}
\put(2.43,2.76){\scriptsize $y$}
\put(2.4,.44){\tiny $(0\hspace{-.1mm},\hspace{-.4mm}0)$}
\put(2.4,2.19){\tiny $(0\hspace{-.1mm},\hspace{-.6mm}1)$}
\put(4.52,.44){\tiny $(1\hspace{-.2mm},\hspace{-.4mm}0)$}
\put(9.71,.71){\scriptsize $x$}
\put(6.93,2.76){\scriptsize $y$}
\put(6.9,.44){\tiny $(0\hspace{-.1mm},\hspace{-.4mm}0)$}
\put(6.9,2.19){\tiny $(0\hspace{-.1mm},\hspace{-.6mm}1)$}
\put(9.12,.44){\tiny $(1\hspace{-.2mm},\hspace{-.4mm}0)$}
\end{picture}
\caption{
Schematics showing the orientation of
$f^{r-s}(\Sigma)$ and $f^r(E^u(0,0;0))$ near $(1,0)$ for $\xi = 0$,
as governed by the signs of $a_2$ and $b^{\cX}_2$.
Under $f^r$, the dashed [solid] part of $f^{-s}(\Sigma)$
maps to the dashed [solid] part of $f^{r-s}(\Sigma)$.
\label{fig:orientationHCCorner}
}
\end{center}
\end{figure}

By (\ref{eq:frmsSigma}) and (\ref{eq:frEu}),
the orientation of $f^{r-s}(\Sigma)$ and $f^r(E^u(0,0;0))$ near $(1,0)$ for $\xi = 0$
is governed by the signs of $a_2$ and $b^{\cX}_2$
(or equivalently $b^{\cY}_2$, as we now assume (\ref{eq:bX2bY2negative}) holds).
This is summarised by Fig.~\ref{fig:orientationHCCorner}.
The part of $f^{-s}(\Sigma)$ near $(0,1)$ that lies to the right of the $y$-axis
maps to the part of $f^{r-s}(\Sigma)$ near $(1,0)$ below the $x$-axis if $a_2 < 0$,
and to the part of $f^{r-s}(\Sigma)$ near $(1,0)$ above the $x$-axis if $a_2 > 0$.
Near $(1,0)$, the manifold $f^r(E^u(0,0;0))$ lies above the $x$-axis if $b^{\cX}_2 < 0$,
and below the $x$-axis if $b^{\cX}_2 > 0$.

\subsubsection*{A condition ensuring a generic unfolding}

Lastly we need to ensure that $\xi$ unfolds the homoclinic corner in a generic fashion.
That is, we require that the distance from the corner of $f^r(E^u(0,0;\xi))$ near $(1,0)$
to $E^s(0,0;\xi)$ is a linear function of $\xi$, to leading order.
The corner is located at the image of $E^u(0,0;\xi) \cap f^{-s}(\Sigma)$ under $f^r$.
The intersection $E^u(0,0;\xi) \cap f^{-s}(\Sigma)$ is given by $(x,y) = (0,g(0;\xi))$
and by (\ref{eq:fS}) its image under $f^r$ is equal to
\begin{equation}
f^r(0,g(0;\xi);\xi) = \begin{bmatrix} 1 + c_1 \xi \\ c_2 \xi \end{bmatrix}
+ \cObig \left( |\xi|^2 \right) \;.
\end{equation}
Hence the condition
\begin{equation}
c_2 \ne 0 \;,
\label{eq:c2nonzero}
\end{equation}
ensures that $\xi$ is a generic unfolding parameter\removableFootnote{
We could scale $\xi$ such that $c_2 = 1$,
but I don't think this is actually that helpful.
}.

\section{Periodic solutions}
\label{sec:unstable}
\setcounter{equation}{0}

In this section we identify single-round periodic solutions,
then show that any periodic solution near the homoclinic corner is unstable.
Let us first summarise the assumptions given above.
Inside the neighbourhood $\cN \subset \cM_1$, which contains the points $(1,0)$ and $(0,1)$,
the map $f$ is given by (\ref{eq:f1}) where $\lambda$ and $\sigma$ satisfy (\ref{eq:eigs1}) and (\ref{eq:eigs2}).
Near $(0,1)$, the map $f^r$ is given by (\ref{eq:fr2}),
where $g$ is given by (\ref{eq:g}) and $f_{\cX}$ and $f_{\cY}$ are given by (\ref{eq:fS}).
The parameters of (\ref{eq:fS}) satisfy the genericity conditions
(\ref{eq:a2nonzero}), (\ref{eq:bX2bY2negative}) and (\ref{eq:c2nonzero}).

\subsubsection*{Single-round periodic solutions}

Given $(x,y;\xi)$ near $(0,1;0)$, the point $f^r(x,y;\xi)$ lies near $(1,0)$.
Therefore, since the forward orbit of $(1,0)$
limits to $(0,0)$ without escaping $\cN$,
we have $f^r(x,y;\xi), f^{r+1}(x,y;\xi), \ldots, f^{r+k-1}(x,y;\xi) \in \cN$,
for some $k \in \mathbb{Z}^+$, in which case
\begin{equation}
f^{r+k}(x,y;\xi) = \begin{cases}
f_{\cX 1^k}(x,y;\xi) \;, & y \le g(x;\xi) \\
f_{\cY 1^k}(x,y;\xi) \;, & y \ge g(x;\xi)
\end{cases} \;.
\label{eq:frpk}
\end{equation}
Moreover, the value of $k$ can be made as large as we like
by choosing $(x,y;\xi)$ arbitrarily close to $(0,1;0)$.

A fixed point of (\ref{eq:frpk}) is one point of a single-round periodic solution of $f$
of period $n = r+k$.
By combining (\ref{eq:f1}) and (\ref{eq:fS}), we obtain
\begin{equation}
f_{\cS 1^k}(x,y;\xi) = \begin{bmatrix}
\lambda^k \left\{ 1 + \left( a_1 + b^{\cS}_1 p_1 \right) x +
b^{\cS}_1 (y-1) + \left( c_1 + b^{\cS}_1 p_3 \right) \xi + \cObig \left( \|x,y-1,\xi\|^2 \right) \right\} \\
\sigma^k \left\{ \left( a_2 + b^{\cS}_2 p_1 \right) x +
b^{\cS}_2 (y-1) + \left( c_2 + b^{\cS}_2 p_3 \right) \xi + \cObig \left( \|x,y-1,\xi\|^2 \right) \right\}
\end{bmatrix} \;,
\label{eq:fS1k}
\end{equation}
for $\cS = \cX, \cY$.
With $\xi = \cObig \left( \sigma^{-k} \right)$,
the functions $f_{\cX 1^k}$ and $f_{\cY 1^k}$ each have a unique fixed point near $(0,1)$ satisfying
\begin{equation}
x^{\cS}_k(\xi) = \lambda^k + \cObig \left( \sigma^{-2k} \right) \;, \qquad
y^{\cS}_k(\xi) = 1 + \frac{\sigma^{-k} - \left( c_2 + b^{\cS}_2 p_3 \right) \xi}{b^{\cS}_2} +
\cObig \left( \lambda^k \right) + \cObig \left( \sigma^{-2k} \right) \;,
\label{eq:xyS1k}
\end{equation}
where we have omitted the dependence of $\lambda$ and $\sigma$ on $\xi$ for clarity.

If $y^{\cX}_k(\xi) \le g \left( x^{\cX}_k(\xi);\xi \right)$,
then $\left( x^{\cX}_k(\xi), y^{\cX}_k(\xi) \right)$
is an admissible (i.e.~valid) fixed point of (\ref{eq:frpk}).
If $k$ is also sufficiently large, then the forward orbit of $\left( x^{\cX}_k(\xi), y^{\cX}_k(\xi) \right)$
is an admissible periodic solution of $f$.
Similarly, if $y^{\cY}_k(\xi) \ge g \left( x^{\cY}_k(\xi);\xi \right)$ and $k$ is sufficiently large,
then the forward orbit of $\left( x^{\cY}_k(\xi), y^{\cY}_k(\xi) \right)$
is an admissible periodic solution of $f$.

The Jacobian matrix of $f_{\cS 1^k}$ at $\left( x^{\cS}_k(\xi), y^{\cS}_k(\xi) \right)$ is
\begin{equation}
\left( D f_{\cS 1^k} \right) \big|_{\left( x^{\cS}_k(\xi), y^{\cS}_k(\xi); \xi \right)} =
\begin{bmatrix}
\left( a_1 + b^{\cS}_1 p_1 \right) \lambda^k + \cObig \left( \lambda^k \sigma^{-k} \right) &
b^{\cS}_1 \lambda^k + \cObig \left( \lambda^k \sigma^{-k} \right)\\
\left( a_2 + b^{\cS}_2 p_1 \right) \sigma^k + \cO(1) &
b^{\cS}_2 \sigma^k + \cO(1)
\end{bmatrix} \;.
\label{eq:DfS1k}
\end{equation}
We let $\tau^{\cS}_k$ and $\delta^{\cS}_k$ denote the trace and determinant of (\ref{eq:DfS1k}):
\begin{align}
\tau^{\cS}_k = {\rm trace} \left( \left( D f_{\cS 1^k} \right)
\big|_{\left( x^{\cS}_k(\xi), y^{\cS}_k(\xi); \xi \right)} \right) &=
b^{\cS}_2 \sigma^k + \cO(1) \;, \label{eq:tauS1k} \\
\delta^{\cS}_k = {\rm det} \left( \left( D f_{\cS 1^k} \right)
\big|_{\left( x^{\cS}_k(\xi), y^{\cS}_k(\xi); \xi \right)} \right) &=
\left( a_1 b^{\cS}_2 - a_2 b^{\cS}_1 \right) \lambda^k \sigma^k +
\cObig \left( \lambda^k \right) \;. \label{eq:deltaS1k}
\end{align}
Since $\sigma > 1$ and $b^{\cX}_2$ and $b^{\cY}_2$ are nonzero, $|\tau^{\cS}_k| \to \infty$ as $k \to \infty$.
Also $\lambda \sigma < 1$, thus $\delta^{\cS}_k \to 0$ as $k \to \infty$.
Therefore the Jacobian matrix (\ref{eq:DfS1k}) has one eigenvalue with modulus less than $1$,
and one eigenvalue with modulus greater than $1$.
Hence the forward orbits of $\left( x^{\cX}_k(\xi), y^{\cX}_k(\xi) \right)$
and $\left( x^{\cY}_k(\xi), y^{\cY}_k(\xi) \right)$ are saddle periodic solutions
for sufficiently large values of $k$.

\subsubsection*{Multi-round periodic solutions and a lack of stability}

We now consider multi-round periodic solutions.
Such solutions involve several excursions far from $(x,y) = (0,0)$.
We anticipate the eigenvalues of a multi-round periodic solution
to be those of a product of matrices of the form (\ref{eq:DfS1k}).
The trace of such a product is dominated its $(2,2)$-element,
which has a term with powers of $\sigma$ accumulated from each matrix in the product.
With a large trace, the eigenvalues of the product cannot both have modulus less than $1$,
in which case the multi-round solution is unstable.
Below we add rigour to this argument to produce Theorem \ref{th:unstable}.

First we clarify multi-round periodic solutions\removableFootnote{
We cannot just say that a multi-round periodic solution
has an itinerary of the form
$\cS_1 1^{k_1} \cdots \cS_q 1^{k_q}$
because the points of the solution could go close to the other branches
of $W^s(0,0;0)$ and $W^u(0,0;0)$, or could conceivably be associated with a
different saddle fixed point of $f_1$.

In some sense we are being a bit restrictive by saying that excursions must be of length $r$,
but it should be understood that we only care about orbits that pass
close to $(0,1)$ and hence, matching our discussion above,
excursions can assumed to be of length $r$.
}.
Let $\cN_1$ denote the intersection of $\cN$ with the first quadrant ($x,y > 0$).
We say that an ``excursion'' consists of $r$ points, at least one of which lies outside $\cN_1$.
A $q$-round periodic solution of $f$ consists of
$q$ excursions, between each of which is a sequence of points in $\cN_1$.

\begin{theorem}
Suppose\removableFootnote{
We use $1 < y < \sigma$ because even though $(0,y)$ may not belong to $\cN$,
the point $\left( 0, \frac{y}{\sigma} \right)$ does belong to $\cN$,
hence $(0,y)$ belongs to the ``local'' part of $W^u(0,0;0)$.
Moreover, for all $\lambda < x < 1$, the point $(x,0)$ belongs to $\cN$
and hence belongs to the ``local'' part of $W^s(0,0;0)$.
}
\begin{equation}
\left\{ f^r(0,y;0) \,\middle|\; 1 < y < \sigma \right\} \cap E^s(0,0;0) = \varnothing \;.
\label{eq:noExtraIntersections}
\end{equation}
Then there exists $k \in \mathbb{Z}^+$ and $\ee > 0$,
such that for any $q \in \mathbb{Z}^+$ and any $|\xi| < \ee$,
any $q$-round periodic solution of $f$ with at least $k$ points between each excursion is unstable.
\label{th:unstable}
\end{theorem}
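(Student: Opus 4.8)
The plan is to show that the Jacobian $M$ of the $N$th-return map around a $q$-round periodic solution (where $N$ is its period) possesses an eigenvalue of modulus exceeding one; this immediately rules out asymptotic stability. First I would record that such a solution splits into $q$ rounds, each a single excursion of length $r$ followed by a sojourn of some length $k_i \ge k$ inside $\cN_1$, where $f$ coincides with the linear map $f_1$. By the chain rule $M$ therefore factors as
\[
M = M_q M_{q-1} \cdots M_1, \qquad M_i = \Lambda_i N_i,
\]
with $\Lambda_i = \mathrm{diag}\!\left(\lambda^{k_i},\sigma^{k_i}\right)$ accounting for the $k_i$ linear steps and $N_i = D f_{\cS_i}$ evaluated at the point near $(0,1)$ from which the $i$th excursion departs, where $\cS_i \in \{\cX,\cY\}$. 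By (\ref{eq:fS}) every $N_i$ has uniformly bounded entries, its $(2,2)$-entry being $b^{\cS_i}_2 + o(1)$, which by (\ref{eq:bX2bY2negative}) stays bounded away from zero. Hence each factor $M_i$ has exactly the shape of (\ref{eq:DfS1k}): a top row of order $\lambda^{k_i}$ atop a bottom row of order $\sigma^{k_i}$.

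Two separate tasks remain. The first is the structural claim that each long-sojourn excursion is genuinely one of the two standard types $f_{\cX}$ or $f_{\cY}$, so that the factorisation above is legitimate. This is where the hypothesis (\ref{eq:noExtraIntersections}) is needed: because each sojourn contains at least $k$ points of $\cN_1$, the orbit must shadow the local stable and unstable axes, entering $\cN_1$ near $E^s(0,0;0)$ and climbing the unstable direction to depart near $(0,1)\in E^u(0,0;0)$, where the following excursion begins. Condition (\ref{eq:noExtraIntersections}) states that the image under $f^r$ of the unstable segment $\{(0,y)\mid 1<y<\sigma\}$ avoids $E^s(0,0;0)$, so that, apart from the corner at $(1,0)=f^r(0,1;0)$, no departure point on a fundamental domain of the unstable manifold maps back onto the stable manifold. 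This forbids nonstandard returns and pins each departure point near $(0,1;0)$. I expect this geometric bookkeeping --- showing that every $N_i$ is a small, uniformly controlled perturbation of $D f_{\cX}|_{(0,1;0)}$ or $D f_{\cY}|_{(0,1;0)}$ for all $|\xi|<\ee$ --- to be the main obstacle.

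The second task is the linear-algebra engine, which I would run through an invariant expanding cone rather than a direct trace estimate; the trace is not multiplicative, and the $o(1)$ errors in the $N_i$ could a priori compound across the $q$ factors. Writing $C_i,D_i$ for the bottom-row entries of $N_i$, fix $\eta>0$ small enough that $\eta\,\sup_i|C_i|<\tfrac12\inf_i|D_i|$ and set $\mathcal{C}_\eta=\{(v_1,v_2)\mid |v_1|\le\eta|v_2|\}$. For $v\in\mathcal{C}_\eta$ the second component of $N_i v$ satisfies $|(N_i v)_2|\ge\tfrac12\inf_i|D_i|\,|v_2|$ and the component ratio of $N_i v$ is uniformly bounded; applying $\Lambda_i$ then shrinks the first component by $\lambda^{k_i}$ and magnifies the second by $\sigma^{k_i}$. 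Using $\lambda/\sigma<1$ from (\ref{eq:eigs1}) and choosing $k$ large, this yields both $M_i\mathcal{C}_\eta\subseteq\mathcal{C}_\eta$ and a uniform expansion $\|M_i v\|\ge\rho\|v\|$ with $\rho=\tfrac{\sigma^{k}\inf_i|D_i|}{2\sqrt{1+\eta^2}}>1$, valid for every admissible $N_i$ and every $k_i\ge k$.

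Composing the factors, $M$ maps $\mathcal{C}_\eta$ into itself and expands by at least $\rho^q$. Passing to the projective line, the arc determined by $\mathcal{C}_\eta$ is sent into itself, so a Brouwer fixed-point argument produces a real eigenvector of $M$ lying in $\mathcal{C}_\eta$, whose eigenvalue has modulus at least $\rho^q>1$. The periodic solution thus carries an expanding direction and is unstable, for every $q\in\mathbb{Z}^+$ and every $|\xi|<\ee$, once $\eta$, then $k$, then $\ee$ have been fixed in that order. The advantage of the cone formulation over the trace heuristic is exactly that expansion is secured one round at a time, so nothing degrades as the number of rounds $q$ grows.
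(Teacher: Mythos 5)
Your proposal is correct, and its first half is essentially the paper's own argument: the paper likewise reduces the problem to the product $\big( D f_{\cS_q 1^{k_q}} \big) \cdots \big( D f_{\cS_1 1^{k_1}} \big)$ of round-by-round Jacobians of the form (\ref{eq:DfS1k}), and likewise uses (\ref{eq:noExtraIntersections}) together with the fact that a long sojourn in $\cN_1$ forces the departure point to be $\cObig\left(\lambda^k\right)$-close to $E^u$ and its image under $f^r$ to be $\cObig\left(\sigma^{-k}\right)$-close to $E^s$, so that each excursion starts within $\cObig(\ee)$ of $(0,1)$ and is governed by (\ref{eq:fr2}). Where you genuinely depart from the paper is the final linear-algebra step. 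The paper concludes by computing the trace of the product, stating it equals $b^{\cS_1}_2 \cdots b^{\cS_q}_2 \sigma^{k_1+\cdots+k_q} + \cObig\left(\lambda^k \sigma^{(q-1)k}\right)$ and is therefore large; you instead build an invariant expanding cone $\mathcal{C}_\eta$ that each factor $M_i = \Lambda_i N_i$ maps into itself while expanding norms by a fixed $\rho>1$, and extract an eigenvector in the cone with eigenvalue of modulus at least $\rho^q$ via a projective fixed-point argument. Your stated motivation for this choice is a real one: in a term-by-term bound the error in the trace of a $q$-fold product carries a constant that can grow geometrically in $q$, so the paper's one-line trace estimate requires (but does not spell out) an inductive control of the product's structure to be uniform in $q$, whereas the cone argument secures expansion one round at a time and is manifestly uniform in both $q$ and the individual sojourn lengths $k_i \ge k$. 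The price is a slightly longer setup (fixing $\eta$, then $k$, then $\ee$, and checking that $M_i v \neq 0$ on the cone so the projective map is well defined even when some $N_i$ is singular), but each of these checks goes through exactly as you describe, using only that the entries of $Df_{\cS}$ from (\ref{eq:fS}) are uniformly bounded with $(2,2)$-entry bounded away from zero by (\ref{eq:bX2bY2negative}). In short: same decomposition and same use of the hypothesis, but a more robust replacement for the paper's trace heuristic.
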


First let us explain condition (\ref{eq:noExtraIntersections}).
The points $(0,1)$ and $(0,\sigma) = f(0,1;0)$ both belong to $E^u(0,0;0)$
and map to $E^s(0,0;0)$ under $f^r$.
The condition (\ref{eq:noExtraIntersections}) ensures that no points on $E^u(0,0;0)$
between $(0,1)$ and $(0,\sigma)$ also map to $E^s(0,0;0)$ under $f^r$.
That is, $f^r(E^u(0,0;0))$ and $E^s(0,0;0)$ have no extra intersections such as a tangency
that could generate stable periodic solutions.

\begin{proof}
For any $M \in \mathbb{R}$, there exists $\ee > 0$ such that for all $|\xi| < \ee$
the image under $f^r$ of any point within $M \ee$ of $(0,1)$ is given by (\ref{eq:fr2})\removableFootnote{
We start with this particular statement because we need to choose $\ee$ before we choose $\xi$.
Also we need the $M$ because below we only show that $(x_i,y_i)$ is an $\cO(\ee)$ distance from $(0,1)$.
From this we conclude that there exists $M$ such that $(x_i,y_i)$ is within a distance $M \ee$ of $(0,1)$.
}.
Then given $k, q \in \mathbb{Z}^+$ and $|\xi| < \ee$,
let $\Gamma$ be a $q$-round periodic solution of $f$ with at least $k$ points between each excursion.
Let $(x_1,y_1), \ldots, (x_q,y_q)$ be the starting points of the excursions of $\Gamma$.
That is, for each $i = 1,\ldots,q$,
we have $(x_i,y_i) \in \cN_1$ and
$f^{r + k_i}(x_i,y_i;\xi) = \left( x_{i+1}, y_{i+1} \right)$,
where $k_i \ge k$ is the number of points of $\Gamma$ between the $i^{\rm th}$ excursion and
the subsequent excursion, and $(x_{q+1},y_{q+1}) = (x_1,y_1)$.

For each $i = 1,\ldots,q$, the $k$ points of $\Gamma$ prior to $(x_i,y_i)$ lie in $\cN_1$.
Since $\cN_1$ is bounded, by (\ref{eq:f1}) the point $(x_i,y_i)$ must be an $\cObig \left( \lambda^k \right)$
distance from $E^u(0,0;\xi)$.
Therefore since $(x_i,y_i) \in \cN_1$, the point $f^r(x_i,y_i)$ is an $\cObig \left( \lambda^k \right)$
distance from $f^r \left( E^u(0,0;\xi) \cap \cN_1; \xi \right)$\removableFootnote{
We need the intersect $\cN_1$ here because we do not know
how points on $E^u(0,0;\xi)$ outside $\cN_1$ iterate under $f^r$.
}.
The $k$ points of $\Gamma$ after $f^r(x_i,y_i)$ lie in $\cN_1$,
thus by (\ref{eq:f1}) the point $f^r(x_i,y_i)$ must also be an $\cObig \left( \sigma^{-k} \right)$
distance from $E^s(0,0;\xi)$.

By (\ref{eq:noExtraIntersections}) the only intersections of 
$f^r \left( E^u(0,0;0) \cap \cN_1; 0 \right)$ and $E^s(0,0;0)$
are at $(1,0)$ and a finite number of images and preimages of $(1,0)$ under (\ref{eq:f1})\removableFootnote{
This is because $\cN_1$ is bounded,
and so the number of consecutive iterates in $\cN_1$ is at least $k_i - \ell$,
where $\ell$ is a finite number related to the size of $\cN_1$
(so we could replace $k$ with $k + \ell$).
}.
Since we are dealing with large values of $k$,
we can assume that each $f^r(x_i,y_i)$ lies near $(1,0)$.
More specifically, with $\sigma^{-k} < \ee$
we have that $f^r(x_i,y_i)$ is an $\cO(\ee)$ distance from $(1,0)$.
Thus $(x_i,y_i)$ is an $\cO(\ee)$ distance from $(0,1)$
and we can assume $M$ is large enough that $(x_i,y_i)$ is within a distance $M \ee$ of $(0,1)$.

Thus each $f^{r+k_i}(x_i,y_i;\xi)$ is given by (\ref{eq:frpk}),
that is equal to $f_{\cS_i 1^{k_i}}(x_i,y_i;\xi)$,
where each $\cS_i$ is either $\cX$ or $\cY$.
Hence the eigenvalues of $\Gamma$ are those of the product
\begin{equation}
\big( D f_{\cS_q 1^{k_q}} \big) \big|_{(x_q,y_q;\xi)} \cdots
\big( D f_{\cS_1 1^{k_1}} \big) \big|_{(x_1,y_1;\xi)} \;.
\end{equation}
By (\ref{eq:DfS1k}), the trace of this product is
\begin{equation}
b^{\cS_1}_2 \cdots b^{\cS_q}_2
\sigma^{k_1 + \cdots + k_q} + \cObig \left( \lambda^k \sigma^{(q-1)k} \right) \;.
\end{equation}
Since $\sigma > 1$ and $b^{\cX}_2$ and $b^{\cY}_2$ are nonzero,
for sufficiently large values of $k$ the trace is large and thus $\Gamma$ is unstable.
\end{proof}

\section{Border-collision bifurcations of single-round periodic solutions}
\label{sec:bcbs}
\setcounter{equation}{0}

Here we identify BCBs of single-round periodic solutions.
We then change coordinates to obtain the border-collision normal form
which we use to study the stable and unstable manifolds of the periodic solutions.

\subsubsection*{The location and basic properties of the border-collision bifurcations}

As described above, for sufficiently large values of $k$
the points $\left( x^{\cX}_k, y^{\cX}_k \right)$ and $\left( x^{\cY}_k, y^{\cY}_k \right)$,
given by (\ref{eq:xyS1k}),
are fixed points of $f_{\cX 1^k}$ and $f_{\cY 1^k}$,
and the forward orbits of these points are single-round periodic solutions of $f$ with period $n = k+r$.

\begin{theorem}
For sufficiently large values of $k \in \mathbb{Z}^+$,
single-round periodic solutions of period $n = k+r$ collide in a BCB at $\xi = \xi_k$, where
\begin{equation}
\xi_k = \frac{1}{c_2} \sigma^{-k} +
\cObig \left( \lambda^k \right) + \cObig \left( \sigma^{-2k} \right) \;.
\label{eq:xik}
\end{equation}
Both periodic solutions are admissible for values of $\xi$ sufficiently close to $\xi_k$ with
\begin{equation}
{\rm sgn}(\xi - \xi_k) = {\rm sgn} \left( b^{\cX}_2 c_2 \right) \;.
\label{eq:xyS1kadmissible}
\end{equation}
The eigenvalues of the periodic solutions are
\begin{equation}
\gamma^{\cS}_{k,u} = b^{\cS}_2 \sigma^k + \cO(1) \;, \qquad
\gamma^{\cS}_{k,s} = \frac{a_1 b^{\cS}_2 - a_2 b^{\cS}_1}{b^{\cS}_2} \lambda^k +
\cObig \left( \lambda^k \sigma^{-k} \right) \;,
\label{eq:gammaus}
\end{equation}
for $\cS = \cX, \cY$.
\label{th:bcbs}
\end{theorem}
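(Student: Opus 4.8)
The plan is to locate each border-collision bifurcation as the value of $\xi$ at which a single-round fixed point from (\ref{eq:xyS1k}) crosses the switching manifold $y = g(x;\xi)$ of (\ref{eq:fr2}), and to use the continuity of $f$ to show the $\cX$ and $\cY$ solutions cross \emph{simultaneously}. For $\cS = \cX,\cY$ I would set
\[
\phi^{\cS}(\xi) = y^{\cS}_k(\xi) - g \left( x^{\cS}_k(\xi);\xi \right),
\]
so that the $\cX$ solution is admissible exactly when $\phi^{\cX}(\xi) \le 0$, the $\cY$ solution exactly when $\phi^{\cY}(\xi) \ge 0$, and a border-collision of the $\cS$ solution occurs where $\phi^{\cS} = 0$. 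The structural observation that drives the ``collision'' is that on $f^{-s}(\Sigma)$ the maps $f_{\cX}$ and $f_{\cY}$ agree (by continuity of $f$ across $\Sigma$), hence so do $f_{\cX 1^k}$ and $f_{\cY 1^k}$. Thus if the fixed point of $f_{\cX 1^k}$ reaches the switching manifold at some $\xi$, that same point is the (unique, near $(0,1)$) fixed point of $f_{\cY 1^k}$; therefore $\phi^{\cX}$ and $\phi^{\cY}$ vanish at a common value $\xi_k$, and the two single-round solutions genuinely coincide there.

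To obtain the expansion (\ref{eq:xik}) I would substitute (\ref{eq:xyS1k}) and (\ref{eq:g}) into $\phi^{\cS}(\xi) = 0$; the contributions proportional to $p_3 \xi$ cancel, and multiplying through by $b^{\cS}_2$ leaves
\[
c_2 \xi = \sigma^{-k} + \cObig \left( \lambda^k \right) + \cObig \left( \sigma^{-2k} \right).
\]
Since $c_2 \ne 0$ by (\ref{eq:c2nonzero}), the implicit function theorem yields a unique root; feeding the self-consistent estimate $\xi = \cObig(\sigma^{-k})$ back into the Taylor remainders of $g$ (which are then $\cObig(\sigma^{-2k})$) confirms the stated form (\ref{eq:xik}). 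Any residual $\cS$-dependence lies inside the $\cObig(\lambda^k)$ term and so is invisible at this order, consistent with $\xi_k$ being common to both branches.

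For admissibility and the sign law (\ref{eq:xyS1kadmissible}) I would note that $\phi^{\cS}(\xi) = \frac{\sigma^{-k} - c_2 \xi}{b^{\cS}_2} + \cObig(\lambda^k) + \cdots$, so $\frac{d}{d\xi}\phi^{\cS} = -\frac{c_2}{b^{\cS}_2}$ to leading order, which is bounded away from $0$. Hence near $\xi_k$ we have $\phi^{\cX} \le 0$ precisely when ${\rm sgn}(\xi - \xi_k) = {\rm sgn}(c_2 b^{\cX}_2)$. Because $b^{\cX}_2 b^{\cY}_2 < 0$ by (\ref{eq:bX2bY2negative}), $\phi^{\cY}$ carries the opposite sign to $\phi^{\cX}$, so the condition $\phi^{\cY} \ge 0$ selects the \emph{same} side of $\xi_k$; both branches are therefore admissible exactly for ${\rm sgn}(\xi - \xi_k) = {\rm sgn}(b^{\cX}_2 c_2)$, which is (\ref{eq:xyS1kadmissible}).

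Finally, the eigenvalues (\ref{eq:gammaus}) follow from the trace (\ref{eq:tauS1k}) and determinant (\ref{eq:deltaS1k}). Since $|\tau^{\cS}_k| \to \infty$ and $\delta^{\cS}_k \to 0$ as $k \to \infty$, the characteristic quadratic has one large and one small root; writing $\gamma^{\cS}_{k,u} = \tau^{\cS}_k - \gamma^{\cS}_{k,s}$ gives $\gamma^{\cS}_{k,u} = b^{\cS}_2 \sigma^k + \cO(1)$, and then $\gamma^{\cS}_{k,s} = \delta^{\cS}_k / \gamma^{\cS}_{k,u}$ expands to $\frac{a_1 b^{\cS}_2 - a_2 b^{\cS}_1}{b^{\cS}_2}\lambda^k + \cObig(\lambda^k \sigma^{-k})$. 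I expect the main obstacle to be not any single step but the careful error-term bookkeeping throughout: ensuring that the self-consistency $\xi_k = \cObig(\sigma^{-k})$ is invoked correctly when bounding the remainders of $g$ and $f_{\cS}$, so that the $\cObig(\lambda^k)$ and $\cObig(\sigma^{-2k})$ terms land exactly as claimed; the conceptual heart, by contrast, is the short continuity argument showing the $\cX$ and $\cY$ border-collisions occur at one and the same $\xi_k$.
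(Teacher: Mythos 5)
Your proposal is correct and follows essentially the same route as the paper: setting $y^{\cS}_k(\xi) - g\bigl(x^{\cS}_k(\xi);\xi\bigr) = 0$ to locate $\xi_k$, reading off the admissibility sign from the leading term $\frac{\sigma^{-k}-c_2\xi}{b^{\cS}_2}$, and extracting the eigenvalues from the trace and determinant (\ref{eq:tauS1k})--(\ref{eq:deltaS1k}). Your explicit continuity argument that the $\cX$ and $\cY$ fixed points coincide on $f^{-s}(\Sigma)$ at a common $\xi_k$ is a correct elaboration of a point the paper leaves implicit.
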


\begin{figure}[b!]
\begin{center}
\setlength{\unitlength}{1cm}
\begin{picture}(11,6.7)
\put(1.5,3.2){\includegraphics[height=3cm]{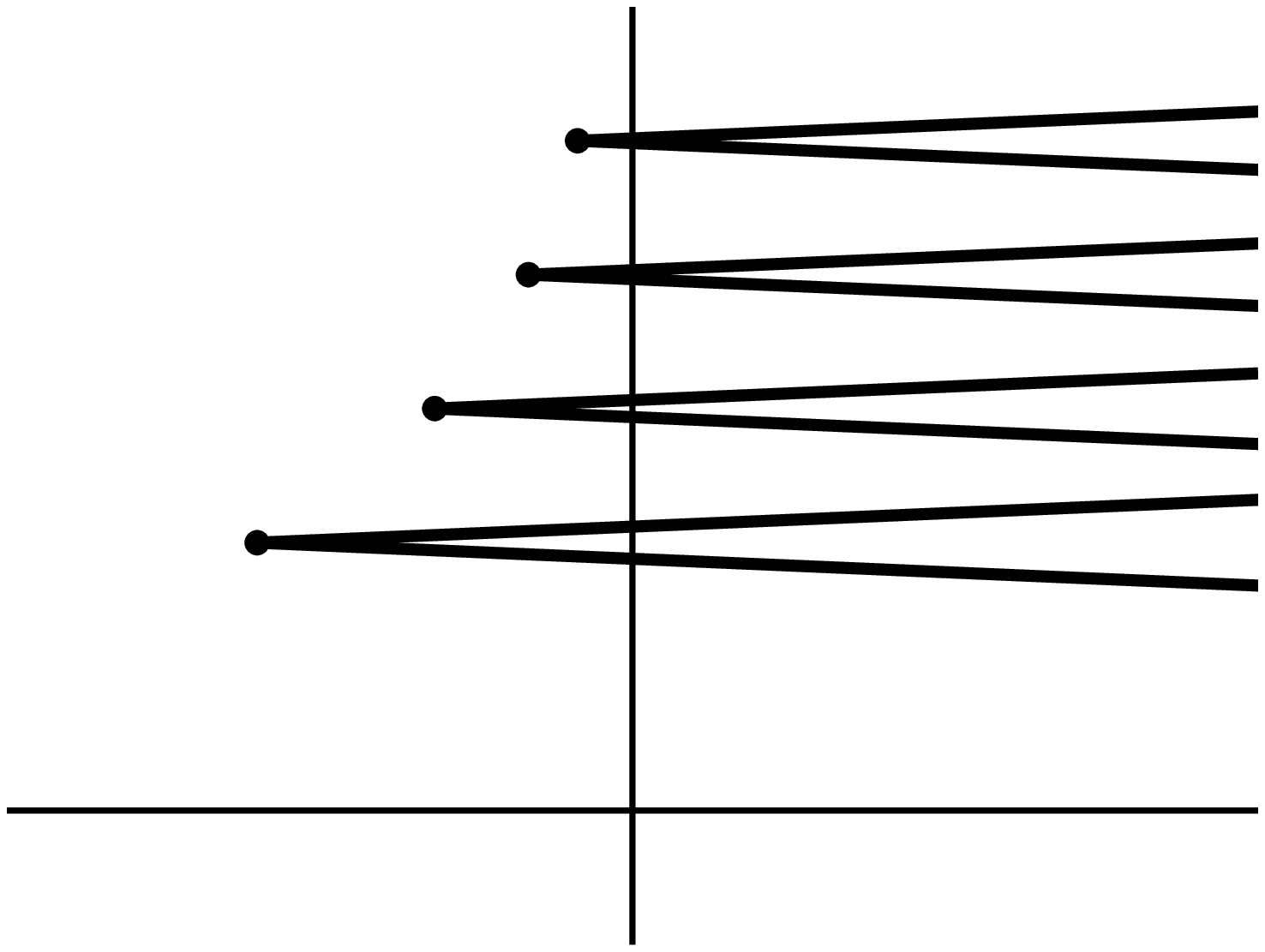}}
\put(7,3.2){\includegraphics[height=3cm]{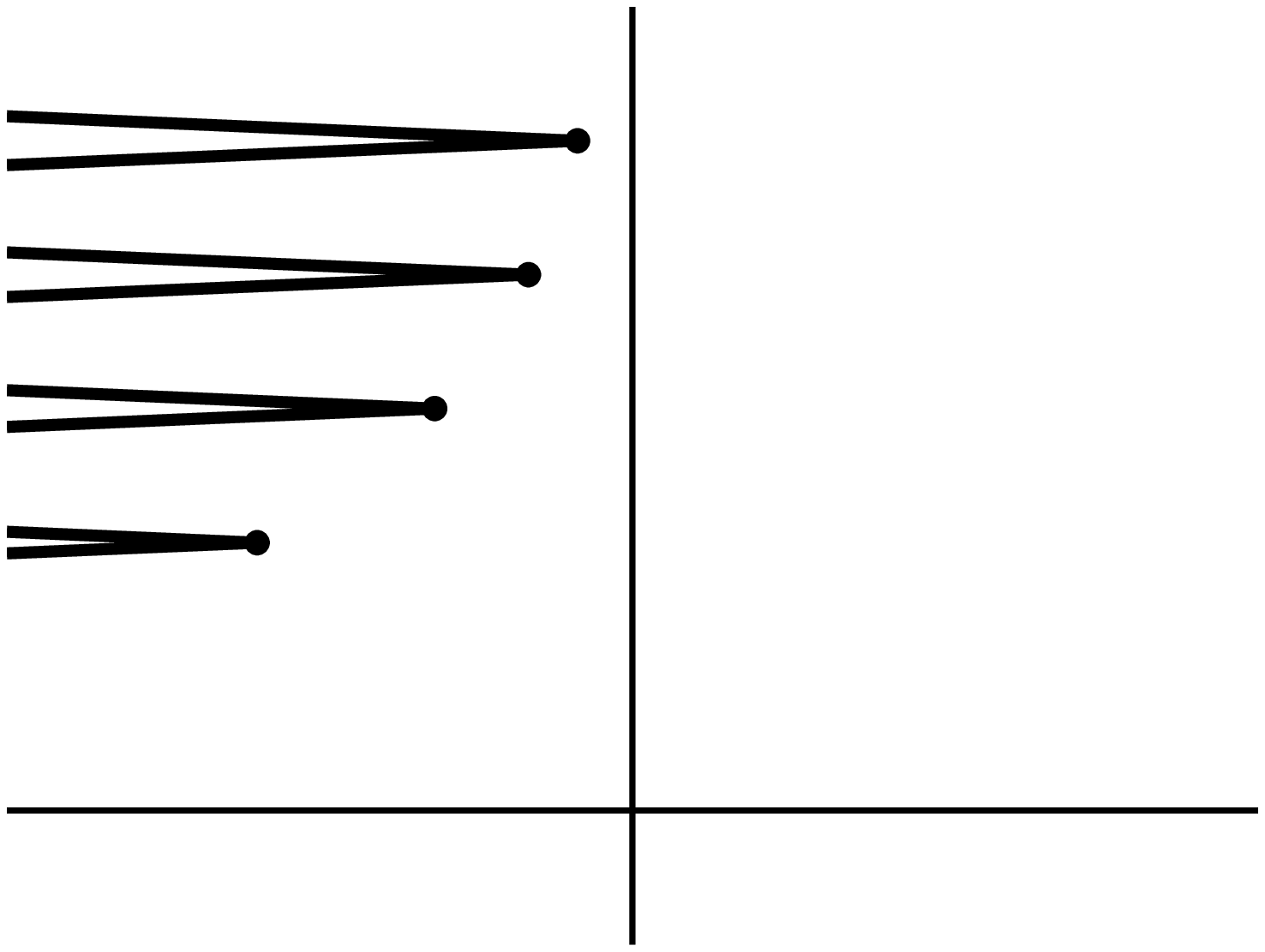}}
\put(1.5,0){\includegraphics[height=3cm]{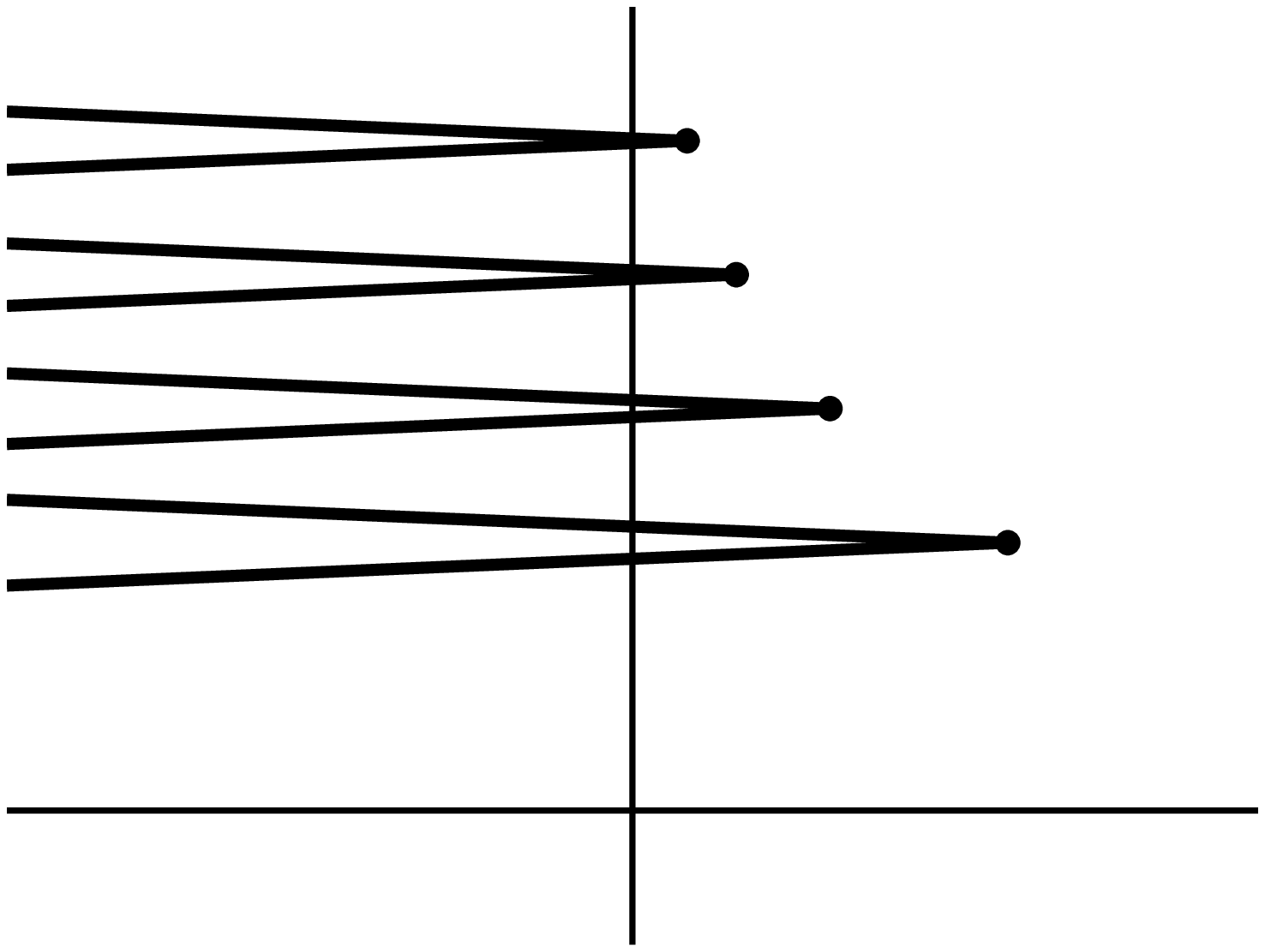}}
\put(7,0){\includegraphics[height=3cm]{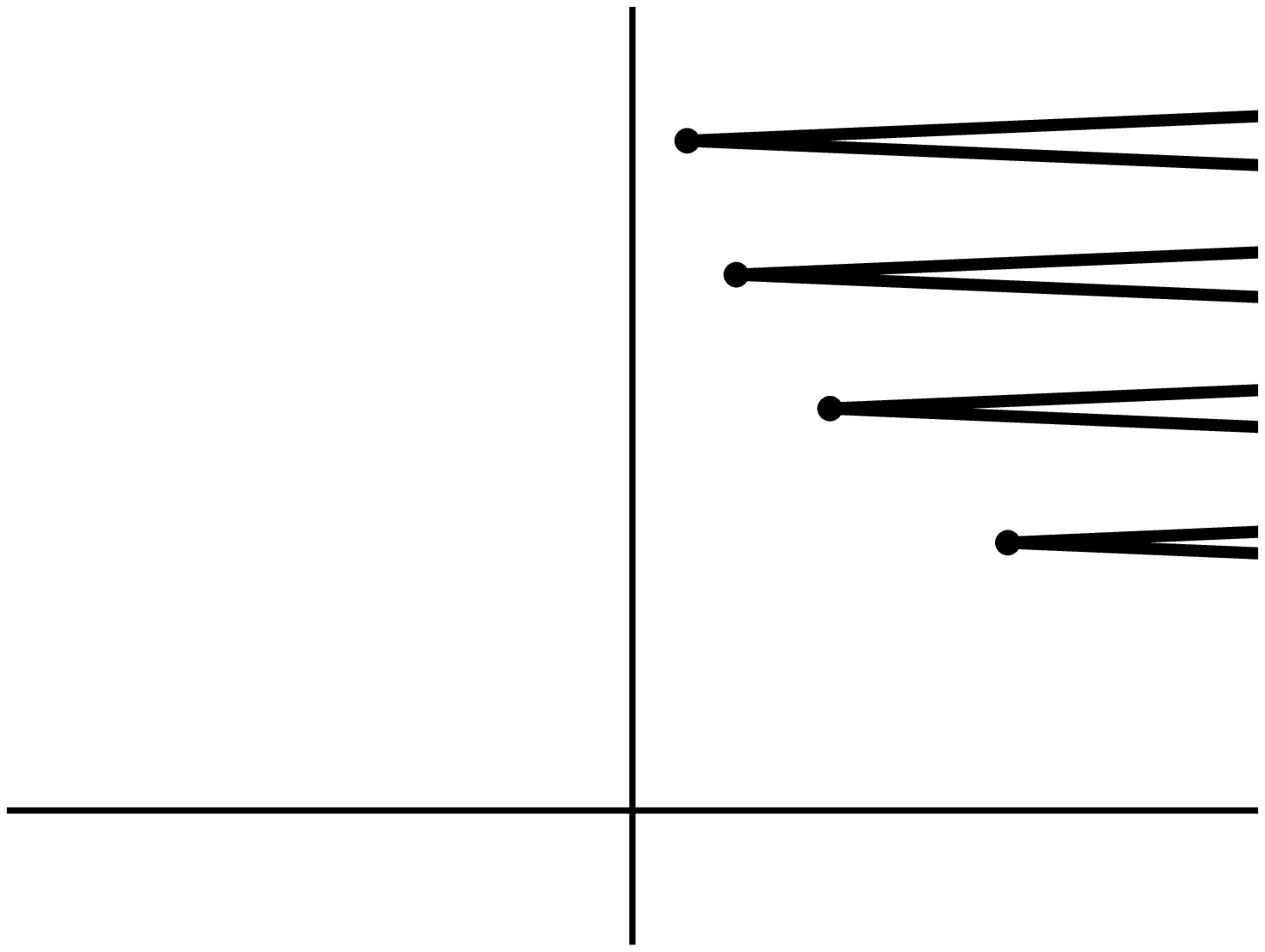}}
\put(0,4.95){\small $c_2 < 0$}
\put(0,1.75){\small $c_2 > 0$}
\put(2.8,6.5){\small $b^{\cX}_2 < 0$}
\put(8.3,6.5){\small $b^{\cX}_2 > 0$}
\put(5.23,3.77){\scriptsize $\xi$}
\put(3.56,6.04){\scriptsize $n+x$}
\put(10.73,3.77){\scriptsize $\xi$}
\put(9.06,6.04){\scriptsize $n+x$}
\put(5.23,.57){\scriptsize $\xi$}
\put(3.56,2.84){\scriptsize $n+x$}
\put(10.73,.57){\scriptsize $\xi$}
\put(9.06,2.84){\scriptsize $n+x$}
\end{picture}
\caption{
Schematic bifurcation diagrams indicating single-round periodic solutions and their BCBs,
as governed by the signs of $c_2$ and $b^{\cX}_2$.
\label{fig:bifDiagsHCCorner}
}
\end{center}
\end{figure}

We can use Theorem \ref{th:bcbs} to sketch bifurcation diagrams.
By (\ref{eq:xik}) the sign of $c_2$ dictates the side of $\xi = 0$ on which the BCBs occur,
whilst by (\ref{eq:xyS1kadmissible}) the sign of $b^{\cX}_2 c_2$ dictates the side of each $\xi_k$
on which the periodic solutions exist.
This is summarised in Fig.~\ref{fig:bifDiagsHCCorner}.

\begin{proof} 
The fixed points $\left( x^{\cX}_k, y^{\cX}_k \right)$ and $\left( x^{\cY}_k, y^{\cY}_k \right)$
coincide on $f^{r-s}(\Sigma)$ when $y^{\cS}_k(\xi) = g \left( x^{\cS}_k(\xi); \xi \right)$.
By combining the expression for the fixed points (\ref{eq:xyS1k}) with the expression for $g$ (\ref{eq:g}), we obtain
\begin{equation}
y^{\cS}_k(\xi) - g \left( x^{\cS}_k(\xi); \xi \right) =
\frac{\sigma^{-k} - c_2 \xi}{b^{\cS}_2} \sigma^{-k} +
\cObig \left( \lambda^k \right) + \cObig \left( \sigma^{-2k} \right) \;,
\label{eq:yhat}
\end{equation}
which is zero for a unique value of $\xi = \cObig \left( \sigma^{-k} \right)$ satisfying (\ref{eq:xik}).

The fixed point $\left( x^{\cX}_k(\xi), y^{\cX}_k(\xi) \right)$
is admissible if $y^{\cX}_k(\xi) \le g \left( x^{\cX}_k(\xi); \xi \right)$
and the fixed point $\left( x^{\cY}_k(\xi), y^{\cY}_k(\xi) \right)$
is admissible if $y^{\cY}_k(\xi) \ge g \left( x^{\cY}_k(\xi); \xi \right)$.
By (\ref{eq:yhat}),
\begin{equation}
{\rm sgn} \left( y^{\cS}_k(\xi) - g \left( x^{\cS}_k(\xi); \xi \right) \right) =
- {\rm sgn} \left( b^{\cS}_2 c_2 (\xi - \xi_k) \right) \;,
\end{equation}
which implies (\ref{eq:xyS1kadmissible}).

Lastly, the eigenvalues of the periodic solutions are are the roots of the characteristic polynomial
$\gamma^2 - \tau^{\cS}_k \gamma + \delta^{\cS}_k$.
The above expressions for $\tau^{\cS}_k$ and $\delta^{\cS}_k$,
(\ref{eq:tauS1k}) and (\ref{eq:deltaS1k}), directly lead to (\ref{eq:gammaus}).
\end{proof}

\subsubsection*{A coordinate change to the border-collision normal form}

Here we introduce $k$-dependent coordinates centred about the BCB:
\begin{equation}
\hat{x} = x - x^{\cX}_k(\xi_k) \;, \qquad
\hat{y} = y - g(x;\xi) \;, \qquad
\hat{\xi} = \xi - \xi_k \;.
\label{eq:hattedCoords}
\end{equation}
Note that $x^{\cX}_k(\xi_k) = x^{\cY}_k(\xi_k)$
(because the two periodic solutions coincide at the BCB),
hence our choice of $\cX$ in (\ref{eq:hattedCoords}) does not generate an asymmetry.
In these coordinates the BCB occurs at $(\hat{x},\hat{y};\hat{\xi}) = (0,0;0)$.
The map $f^{r+k}$, as given by (\ref{eq:frpk}), becomes 
\begin{equation}
\begin{bmatrix} \hat{x} \\ \hat{y} \end{bmatrix} \mapsto
\begin{cases}
\begin{bmatrix} \hat{a}_1 & \hat{b}^{\cX}_1 \\ \hat{a}_2 & \hat{b}^{\cX}_2 \end{bmatrix}
\begin{bmatrix} \hat{x} \\ \hat{y} \end{bmatrix} +
\begin{bmatrix} \hat{c}_1 \\ \hat{c}_2 \end{bmatrix} \hat{\xi} +
\cObig \left( \| \hat{x}, \hat{y}, \hat{\xi} \|^2 \right) \;, & \hat{y} \le 0 \\
\begin{bmatrix} \hat{a}_1 & \hat{b}^{\cY}_1 \\ \hat{a}_2 & \hat{b}^{\cY}_2 \end{bmatrix}
\begin{bmatrix} \hat{x} \\ \hat{y} \end{bmatrix} +
\begin{bmatrix} \hat{c}_1 \\ \hat{c}_2 \end{bmatrix} \hat{\xi} +
\cObig \left( \| \hat{x}, \hat{y}, \hat{\xi} \|^2 \right) \;, & \hat{y} \ge 0
\end{cases} \;,
\label{eq:hattedfrpk}
\end{equation}
where
\begin{align}
\hat{a}_1 &= a_1 \lambda^k + \cObig \left( \lambda^k \sigma^{-k} \right) \;, &
\hat{b}^{\cS}_1 &= b^{\cS}_1 \lambda^k + \cObig \left( \lambda^k \sigma^{-k} \right) \;, &
\hat{c}_1 &= c_1 \lambda^k + \cObig \left( \lambda^k \sigma^{-k} \right) \;, \label{eq:hattedParams1} \\
\hat{a}_2 &= a_2 \sigma^k + \cO(1) \;, &
\hat{b}^{\cS}_2 &= b^{\cS}_2 \sigma^k + \cO(1) \;, &
\hat{c}_2 &= c_2 \sigma^k + \cO(1) \;, \label{eq:hattedParams2}
\end{align}
for $\cS = \cX,\cY$.

Next we apply a second coordinate change to put the map into the border-collision normal form (\ref{eq:bcNormalForm}).
With 
\begin{equation}
\tilde{x} = \hat{y} \;, \qquad
\tilde{y} = \hat{a}_2 \hat{x} + \hat{a}_1 \hat{y} +
\left( \hat{a}_1 \hat{c}_2 - \hat{a}_2 \hat{c}_1 \right) \hat{\xi} \;, \qquad
\tilde{\xi} = \hat{c}_2 \hat{\xi} \;,
\label{eq:tildedCoords}
\end{equation}
the map (\ref{eq:hattedfrpk}) becomes
\begin{equation}
\begin{bmatrix} \tilde{x} \\ \tilde{y} \end{bmatrix} \mapsto \begin{cases}
\begin{bmatrix} \tau^{\cX}_k & 1 \\ -\delta^{\cX}_k & 0 \end{bmatrix}
\begin{bmatrix} \tilde{x} \\ \tilde{y} \end{bmatrix} +
\begin{bmatrix} \tilde{\xi} \\ 0 \end{bmatrix} \;, & \tilde{x} \le 0 \\
\begin{bmatrix} \tau^{\cY}_k & 1 \\ -\delta^{\cY}_k & 0 \end{bmatrix}
\begin{bmatrix} \tilde{x} \\ \tilde{y} \end{bmatrix} +
\begin{bmatrix} \tilde{\xi} \\ 0 \end{bmatrix} \;, & \tilde{x} \ge 0
\end{cases} \;,
\label{eq:bcNormalForm2}
\end{equation}
omitting nonlinear terms in the two pieces of the map,
and where $\tau^{\cX}_k$, $\delta^{\cX}_k$, $\tau^{\cY}_k$ and $\delta^{\cY}_k$
are given by (\ref{eq:tauS1k}) and (\ref{eq:deltaS1k}).
The coordinate change (\ref{eq:tildedCoords}) is invertible
because $\hat{a}_2$ and $\hat{c}_2$ are nonzero,
which is a consequence of (\ref{eq:a2nonzero}), (\ref{eq:c2nonzero}),
and (\ref{eq:hattedParams2}).
This also demonstrates that (\ref{eq:hattedfrpk}) is ``observable'' \cite{Si16,Di03} -- a
necessary condition for (\ref{eq:hattedfrpk})
to be transformable to the border-collision normal form.

\subsubsection*{Dynamics created in the border-collision bifurcations}

\begin{figure}[b!]
\begin{center}
\setlength{\unitlength}{1cm}
\begin{picture}(8,6)
\put(0,0){\includegraphics[height=6cm]{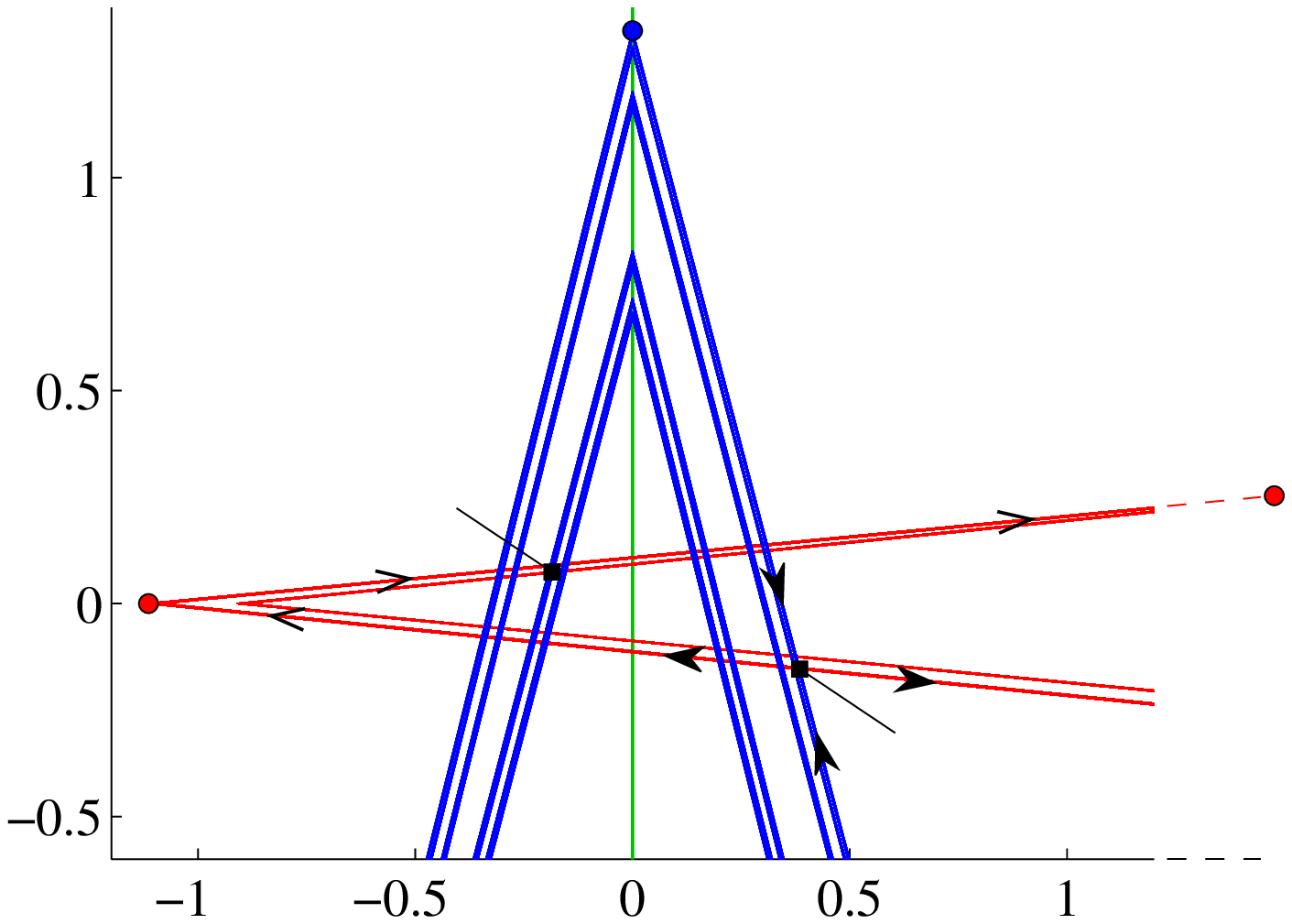}}
\put(3.85,0){\small $\tilde{x}$}
\put(0,2.27){\small $\tilde{y}$}
\put(4.12,5.7){\scriptsize $z_{-1}$}
\put(1.02,2.12){\scriptsize $z_1$}
\put(7.58,3.2){\scriptsize $z_2$}
\put(1.68,3){\scriptsize $\left( \tilde{x}^{\cX}_k, \tilde{y}^{\cX}_k \right)$}
\put(5.58,1.36){\scriptsize $\left( \tilde{x}^{\cY}_k, \tilde{y}^{\cY}_k \right)$}
\end{picture}
\caption{
A phase portrait of (\ref{eq:bcNormalForm2}) with (\ref{eq:tildedParams}) and $\tilde{\xi} = -1$,
showing the stable and unstable manifolds of $\left( \tilde{x}^{\cY}_k, \tilde{y}^{\cY}_k \right)$.
\label{fig:invMansHCCorner}
}
\end{center}
\end{figure}

We now use (\ref{eq:bcNormalForm2}) to analyse dynamics created in the BCB.
By (\ref{eq:tauS1k}), the traces $\tau^{\cX}_k$ and $\tau^{\cY}_k$ are large and of opposing signs.
By (\ref{eq:deltaS1k}), the determinants $\delta^{\cX}_k$ and $\delta^{\cY}_k$ are small.
As an example, we use
\begin{equation}
\tau^{\cX}_k = -4 \;, \qquad
\delta^{\cX}_k = 0.4 \;, \qquad
\tau^{\cY}_k = 4 \;, \qquad
\delta^{\cY}_k = 0.4 \;.
\label{eq:tildedParams}
\end{equation}
This example uses positive values for both determinants
which corresponds to the case that $f^r$ is locally invertible and orientation-preserving.
With instead larger values for the traces and smaller values for the determinants,
the dynamics of the map is similar but features are more difficult to discern
in a single phase portrait.

The structure of the dynamics of (\ref{eq:bcNormalForm2})
is independent of the magnitude of $\tilde{\xi}$.
This is due to the piecewise-linear nature of the border-collision normal form \cite{Si16}.
It therefore suffices to consider three values for $\tilde{\xi}$, say $-1$, $0$ and $1$.
The BCB occurs at $\tilde{\xi} = 0$.
With $\tilde{\xi} = 1$, the map (\ref{eq:bcNormalForm2}) with (\ref{eq:tildedParams})
has no bounded invariant sets\removableFootnote{
This can be proved by showing that forward orbits always get trapped
on the side with the positive eigenvalues,
and then diverge in the direction of the unstable eigenvector.
}.
Fig.~\ref{fig:invMansHCCorner} shows a phase portrait of (\ref{eq:bcNormalForm2}) with
(\ref{eq:tildedParams}) and $\tilde{\xi} = -1$.
There are two saddle fixed points,
$\left( \tilde{x}^{\cX}_k, \tilde{y}^{\cX}_k \right)$ and $\left( \tilde{x}^{\cY}_k, \tilde{y}^{\cY}_k \right)$,
which correspond to the fixed points
$\left( x^{\cX}_k, y^{\cX}_k \right)$ and $\left( x^{\cY}_k, y^{\cY}_k \right)$
of (\ref{eq:xyS1k}), respectively.
The fixed point $\left( \tilde{x}^{\cY}_k, \tilde{y}^{\cY}_k \right)$ has positive eigenvalues,
and its stable and unstable manifolds have transverse intersections.
The stable and unstable manifolds of
$\left( \tilde{x}^{\cY}_k, \tilde{y}^{\cY}_k \right)$
(which has negative eigenvalues)
are almost indistinguishable from those of $\left( \tilde{x}^{\cY}_k, \tilde{y}^{\cY}_k \right)$
on the scale Fig.~\ref{fig:invMansHCCorner},
and also intersect transversally.

\begin{theorem}
Consider (\ref{eq:bcNormalForm2}) with (\ref{eq:tauS1k}) and (\ref{eq:deltaS1k})
and ${\rm sgn}(\tilde{\xi}) = {\rm sgn} \left( b^{\cX}_2 \right)$
(so that both fixed points are admissible).
For sufficiently large values of $k \in \mathbb{Z}^+$,
the stable manifolds of $\left( \tilde{x}^{\cX}_k, \tilde{y}^{\cX}_k \right)$
and $\left( \tilde{x}^{\cY}_k, \tilde{y}^{\cY}_k \right)$ transversally intersect
the unstable manifolds of these fixed points.
\label{th:transverseIntersections}
\end{theorem}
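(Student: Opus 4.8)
The plan is to work directly with the piecewise-linear map (\ref{eq:bcNormalForm2}), exploiting the fact that its dynamics is invariant under the scaling $(\tilde{x},\tilde{y},\tilde{\xi}) \mapsto (c\tilde{x},c\tilde{y},c\tilde{\xi})$. Thus I may fix $\tilde{\xi} = {\rm sgn} \left( b^{\cX}_2 \right)$, so that by Theorem \ref{th:bcbs} both fixed points are admissible, with $p^{\cX} = \left( \tilde{x}^{\cX}_k, \tilde{y}^{\cX}_k \right)$ lying in the region $\tilde{x} \le 0$ and $p^{\cY} = \left( \tilde{x}^{\cY}_k, \tilde{y}^{\cY}_k \right)$ in $\tilde{x} \ge 0$, each an $\cObig(\sigma^{-k})$ distance from the switching line $\tilde{x} = 0$. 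Throughout I will use the asymptotics of (\ref{eq:tauS1k})--(\ref{eq:deltaS1k}): the traces $\tau^{\cS}_k = b^{\cS}_2 \sigma^k + \cO(1)$ are large and of opposing signs, while the determinants $\delta^{\cS}_k \to 0$.

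The central observation is that for this parameter regime \emph{transversality is automatic}, so the only genuine content is the existence of an intersection. The Jacobian of each piece is $\left[ \begin{smallmatrix} \tau^{\cS}_k & 1 \\ -\delta^{\cS}_k & 0 \end{smallmatrix} \right]$, which sends a vector of slope $m$ with $|m|$ small to one of slope $-\delta^{\cS}_k / (\tau^{\cS}_k + m) = \cObig(\lambda^k)$; since $\lambda \sigma < 1$ forces $\lambda^k < \sigma^{-k}$, the horizontal cone $\{ |m| \le \sigma^{-k} \}$ is forward-invariant and contains the unstable eigendirection. Hence every segment of each unstable manifold has slope $\cObig(\lambda^k)$. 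Dually, applying the inverse Jacobian shows that a narrow cone about the vertical (inverse-slope $\cObig(\sigma^{-k})$) is backward-invariant and contains the stable eigendirection, so every segment of each stable manifold is nearly vertical. Consequently, at any point where a stable and an unstable manifold meet, the two tangent lines lie in disjoint cones and the intersection is transverse, for all sufficiently large $k$ and irrespective of where the intersection occurs.

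It remains to produce the intersections, and here the piecewise-linearity is decisive because the manifolds are polygonal and their segments can be traced exactly. I would follow the unstable manifold of $p^{\cY}$: it leaves along the nearly-horizontal unstable eigendirection and crosses $\tilde{x} = 0$ at a point with $\tilde{y} = \cObig(\sigma^{-k})$, whose image under $f$ has $\tilde{x}$-coordinate $\tilde{y} + \tilde{\xi} \approx \tilde{\xi}$, i.e.~the large trace throws the fold an $\cO(1)$ distance from the origin. I would then track this folded segment together with the nearly-vertical branches of the stable manifolds of $p^{\cX}$ and $p^{\cY}$, and show by an intermediate-value argument that a nearly-horizontal unstable segment must cross a nearly-vertical stable segment: the unstable segment sweeps monotonically across a range of $\tilde{x}$ that, to leading order in $\sigma^{-k}$, straddles the $\tilde{x}$-coordinate of a stable segment spanning the corresponding range of $\tilde{y}$. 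The leading-order endpoint positions, computed from (\ref{eq:tauS1k})--(\ref{eq:deltaS1k}) and the explicit eigenvectors, are what verify the crossing; combined with the cone estimate this yields a transverse homoclinic or heteroclinic point. Equivalently, one may package the two cone fields above as the cone conditions of a Conley--Moser setup, with the strong expansion guaranteeing that suitable quadrilaterals about the fixed points map across one another, producing an invariant set conjugate to a shift in which the fixed points have transverse homoclinic orbits.

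The main obstacle I anticipate is controlling the global geometry of the folded manifolds under the \emph{unbounded} expansion: because $|\tau^{\cS}_k| \to \infty$, a short segment near the switching line is stretched to $\cO(1)$ length in a single step, so I must follow images through $\tilde{x} = 0$ carefully and confirm that the relevant crossing occurs where the leading-order asymptotics are uniform in $k$. Reassuringly, larger traces make the map \emph{more} strongly hyperbolic, so the crossing, once located, is increasingly robust; the difficulty is purely the bookkeeping of the fold locations, not any loss of transversality. The four sign cases of Fig.~\ref{fig:orientationHCCorner} and Fig.~\ref{fig:bifDiagsHCCorner} (the combinations of ${\rm sgn}(b^{\cX}_2)$ and ${\rm sgn}(c_2)$) would be handled by the same argument, interchanging the roles of the two regions and the direction of $\tilde{\xi}$.
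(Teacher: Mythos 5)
Your proposal is correct and follows essentially the same route as the paper's proof: fix $\tilde{\xi}=\pm 1$ by scale invariance, compute the fixed points and eigenvectors from (\ref{eq:tauS1k})--(\ref{eq:deltaS1k}), and track the polygonal manifolds through their first corners (the paper's $z_{-1}$, $z_1$, $z_2$) to exhibit a nearly-horizontal unstable segment crossing a nearly-vertical stable segment. Your cone-field observation that transversality is automatic wherever the manifolds meet is a minor but genuine tidying of the paper's segment-by-segment slope check, not a different method.
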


\begin{proof} 

The proof is achieved via direct calculations.
The fixed points of (\ref{eq:bcNormalForm2}) with (\ref{eq:tauS1k}) and (\ref{eq:deltaS1k})
are given by
\begin{equation}
\tilde{x}^{\cS}_k(\tilde{\xi}) = \left( \frac{-1}{b^{\cS}_2} \sigma^{-k} +
\cObig \left( \sigma^{-2k} \right) \right) \tilde{\xi} \;, \qquad
\tilde{y}^{\cS}_k(\tilde{\xi}) = \left( \frac{a_1 b^{\cS}_2 - a_2 b^{\cS}_1}{b^{\cS}_2} \lambda^k +
\cObig \left( \lambda^k \sigma^{-k} \right) \right) \tilde{\xi} \;.
\label{eq:tildexyS1k}
\end{equation}
Their eigenvalues are given by (\ref{eq:gammaus})
and the corresponding eigenvectors are
\begin{equation}
v^{\cS}_{k,u} = \begin{bmatrix} 1 \\
-\frac{a_1 b^{\cS}_2 - a_2 b^{\cS}_1}{b^{\cS}_2} \lambda^k +
\cObig \left( \lambda^k \sigma^{-k} \right) \end{bmatrix} \;, \qquad
v^{\cS}_{k,s} = \begin{bmatrix}
-\frac{1}{b^{\cS}_2} \sigma^{-k} + \cObig \left( \sigma^{-2k} \right) \\ 1 \end{bmatrix} \;.
\label{eq:vus}
\end{equation}
Suppose $b^{\cX}_2 < 0$ (the case $b^{\cX}_2 > 0$ can be proved similarly).
Then $\tilde{\xi} < 0$ and it suffices to consider $\tilde{\xi} = -1$.

Next we show that the stable and unstable manifolds
of $\left( \tilde{x}^{\cY}_k, \tilde{y}^{\cY}_k \right)$ intersect.
Intersections involving the invariant manifolds of $\left( \tilde{x}^{\cX}_k, \tilde{y}^{\cX}_k \right)$
can be proved in the same fashion and for brevity we omit such a proof.

As the stable manifold of the fixed point $\left( \tilde{x}^{\cY}_k, \tilde{y}^{\cY}_k \right)$
emanates from this point,
it is initially linear and in the direction of the stable eigenvector $v^{\cS}_{k,s}$.
The first corner (or kink) of the stable manifold is on the switching manifold ($\tilde{x} = 0$),
and we call this point $z_{-1}$, see Fig.~\ref{fig:invMansHCCorner}.
From (\ref{eq:tildexyS1k}) and (\ref{eq:vus})
we determine the $\tilde{y}$-value of $z_{-1}$ to be $1 + \cObig \left( \sigma^{-k} \right)$.

Similarly, from (\ref{eq:tildexyS1k}) and (\ref{eq:vus}) we find that the first corner
of the unstable manifold of $\left( \tilde{x}^{\cY}_k, \tilde{y}^{\cY}_k \right)$, call it $z_1$,
occurs on $\tilde{y} = 0$ (the image of the switching manifold)
with an $\tilde{x}$-value of $-1 + \cObig \left( \sigma^{-k} \right)$.
The next corner of the unstable manifold occurs at the image of $z_1$, call it $z_2$.
From (\ref{eq:tauS1k}), (\ref{eq:deltaS1k}) and (\ref{eq:bcNormalForm2}) we obtain
\begin{equation}
z_2 = \begin{bmatrix}
-b^{\cX}_2 \sigma^k + \cO(1) \\
\left( a_1 b^{\cX}_2 - a_2 b^{\cX}_1 \right) \lambda^k \sigma^k + \cObig \left( \lambda^k \right)
\end{bmatrix} \;.
\end{equation}

In summary, the stable manifold of $\left( \tilde{x}^{\cY}_k, \tilde{y}^{\cY}_k \right)$
includes the line segment from $\left( \tilde{x}^{\cY}_k, \tilde{y}^{\cY}_k \right)$ to $z_{-1}$,
and the unstable manifold of $\left( \tilde{x}^{\cY}_k, \tilde{y}^{\cY}_k \right)$
includes the line segment from $z_1$ to $z_2$.
From our expressions for these points
it is evident that these line segments intersect transversally for sufficiently large values of $k$.
\end{proof}

\section{Discussion}
\label{sec:conc}
\setcounter{equation}{0}

\subsubsection*{Summary}

This paper is the first to provide the generic unfolding of a homoclinic corner.
Theorem \ref{th:unstable} states that with parameter values
sufficiently close to the homoclinic corner (within $\ee$)
there cannot exist a stable periodic solution involving many (at least $k$) consecutive points
in a neighbourhood of the saddle each time the solution enters this neighbourhood.
Typical nearby forward orbits therefore either approach an aperiodic attractor, or diverge.
The practical effect of a homoclinic corner may be that of a crisis, as in Fig.~\ref{fig:pptRmp6},
where a chaotic attractor is destroyed.

A sequence of BCBs,
$\xi_n$, at which pairs of single-round periodic solutions are created,
limits to a homoclinic corner, see Theorem \ref{th:bcbs}.
This is analogous to the sequence of saddle-node bifurcations near a homoclinic tangency of a smooth map,
and the two sequences satisfy the same scaling law.
The dynamics local to each BCB is described by a piecewise-linear continuous map
which we transformed into the border-collision normal form (\ref{eq:bcNormalForm2}).
From this we showed that the stable and unstable manifolds of the
single-round periodic solutions have transverse intersections, Theorem \ref{th:transverseIntersections}.
At each BCB an invariant Cantor set is created on which the dynamics is chaotic.
In contrast, the dynamics local to each saddle-node bifurcation
near a homoclinic tangency of a smooth map is well approximated by a
two-dimensional quadratic map \cite{PaTa93,HoWh84},
which can be transformed into the H\'{e}non map \cite{He76,Mi87}.

The calculations given above assume $\lambda \sigma < 1$.
With instead $\lambda \sigma > 1$,
the roles of $\lambda$ and $\frac{1}{\sigma}$ are reversed.
Pairs of single-round periodic solutions are created in BCBs with the alternate scaling law,
$\xi_n = C \lambda^n + \cObig \left( \sigma^{-n} \right) + \cObig \left( \lambda^{2 n} \right)$.
But again both periodic solutions are saddles
and the dynamics near the BCBs is given by (\ref{eq:bcNormalForm2}).

\subsubsection*{Reduction to a skew tent map}

Since the determinants $\delta^{\cX}_k$ and $\delta^{\cY}_k$ in (\ref{eq:bcNormalForm2}) are small,
the $\tilde{x}$-values of iterates of (\ref{eq:bcNormalForm2}) are well approximated by the one-dimensional map
\begin{equation}
\tilde{x} \mapsto \begin{cases}
\tau^{\cX}_k \tilde{x} + \tilde{\xi} \;, & \tilde{x} \le 0 \\
\tau^{\cY}_k \tilde{x} + \tilde{\xi} \;, & \tilde{x} \ge 0
\end{cases} \;.
\label{eq:skewTentMap}
\end{equation}
This is a steep skew tent map.
The analogous one-dimensional map for a homoclinic tangency is quadratic.
Each BCB can therefore be viewed as the contraction into a single point
of the well known bifurcation sequence of quadratic maps
(a saddle-node bifurcation, a period-doubling cascade to chaos,
windows of periodicity and bifurcations associated with the chaotic regime,
and finally the loss of a local attractor in a homoclinic bifurcation).
Arbitrarily steep quadratic maps can be renormalised,
but the piecewise-linear nature of (\ref{eq:skewTentMap})
inhibits this manner of renormalisation.

\subsubsection*{Homoclinic corners in higher-dimensional maps}

For a homoclinic tangency to a saddle fixed point in a high-dimensional smooth map,
only the slowest stable and unstable directions related to the fixed point are important
for understanding the dynamics close to the tangency \cite{GoSh96,GoSh97}.
This is because typical forward and backward orbits approach the fixed point tangent to the slow manifolds.
If the eigenvalues corresponding to the slowest stable and unstable directions
are both real, generically the limiting dynamics is governed by a one-dimensional quadratic map,
as in the two-dimensional case.
Otherwise one or both slow directions correspond to complex eigenvalues,
and there are a total of four different limiting maps that arise in generic situations \cite{GoTu05}.

For homoclinic corners in higher dimensions
we expect there to be a similar set of limiting maps.
As with (\ref{eq:skewTentMap}), we expect that these maps are piecewise-linear
but involve arbitrarily large coefficients
and for this reason lack attracting invariant sets.

\subsubsection*{Outlook for future studies}

In addition to accommodating higher dimensions, several problems remain for future work.
The codimension-two homoclinic corner of Fig.~\ref{fig:tonguesExact}
appears to be a limit point of mode-locking regions.
This phenomenon is evident in a similar codimension-two scenario for a PWS discontinuous map \cite{Mi13}
and could be explained by a generalisation of the unfolding performed here.
It would also be helpful to study homoclinic corners as the limit of homoclinic tangencies
by smoothing the map (\ref{eq:f})
so that we can understand how the stability of periodic solutions is lost in the nonsmooth limit.

\end{document}